\documentclass[11pt]{article}
\usepackage{a4wide}
\usepackage{epsfig}
\usepackage{oldgerm}
\usepackage{amsmath}
\usepackage{amsthm}
\usepackage{amssymb}
\usepackage[numbers,sort&compress]{natbib}
\usepackage{epstopdf}
\usepackage{amstext,color}
\usepackage{array,multirow}
\usepackage{fullpage}
\usepackage{fixmath}
\usepackage{graphicx,graphics}

\usepackage[vlined,ruled,linesnumbered]{algorithm2e}

\newtheorem{theorem}{Theorem}[section]
\newtheorem{proposition}{Proposition}[section]

\newtheorem{lemma}{Lemma}[section]
\newtheorem{remark}{Remark}[section]
\textwidth 6.0in \textheight 9.0in \setlength{\unitlength}{0.03in}
\voffset 0.2in
\addtolength{\topmargin}{-.4in}
\def\CC{{\textmd \kern.24em \vrule width.02em height1.4ex depth-.05ex \kern-.26emC}}
\def\TagOnRight
\def\QQ{\rlap {\raise 0.4ex \hbox{$\scriptscriptstyle |$}} {\hskip -0.1em Q}}
\catcode`\@=11
\begin{document}
\begin{center}
 {{\bf \large {\rm {\bf  Fractional Crank-Nicolson-Galerkin Finite Element Scheme for the Time-Fractional Nonlinear Diffusion Equation}}}}
\end{center}
\begin{center}
{\textmd {\bf Dileep Kumar}}\footnote{\it Department of Mathematics, Indian Institute of Technology, Delhi, India, (dilipkmr832@gmail.com)},\\
{\textmd {{\bf Sudhakar Chaudhary}}}\footnote{\it Department of Mathematics, Jaypee Institute of Information Technology, Noida, Uttar Pradesh, India (sudhakarpatel.iitd@gmail.com)},
{\textmd {{\bf V.V.K Srinivas Kumar}}}\footnote{\it Department of Mathematics, Indian Institute of Technology, Delhi, India, {(vvksrini@maths.iitd.ac.in)}  }
\end{center}

\begin{abstract}
This article presents a finite element scheme with Newton's method for solving the time-fractional nonlinear diffusion equation. For time discretization, we use the fractional Crank-Nicolson scheme based on backward Euler convolution quadrature. We discuss the existence-uniqueness results for the fully-discrete problem. Discrete fractional Gronwall type inequality for the backward Euler convolution quadrature which is used to approximate the Riemann-Liouville fractional derivative is established by using the idea of D.Li et al. given in \cite{D.}.  {\it{A priori}} error estimate for the fully-discrete problem in $L^2(\Omega)$ norm is derived. Numerical results based on finite element scheme are provided to validate theoretical estimates.
\end{abstract}
{\bf Keywords:} 
Time-fractional diffusion equation,  Fractional Crank-Nicolson  method, Error estimates,  Discrete fractional Gronwall type inequality
\section{Introduction} 
Fractional partial differential equations (FPDEs) have been widely applied in several real life problems in biology, engineering, and physics \cite{west2007, Metzler, Kil,Podlubny, Mainardi}. 
In this work, we consider the following time-fractional nonlinear diffusion equation.\\
Find $u=u(x,t),$ $x\in\Omega$ and $t>0$ such that
\begin{subequations}
	\begin{align}
	\label{cuc:1.1}
	^C{D}^{\alpha}_{t}u-\Delta{u} &= f(u)  \quad \mbox{in}  \quad \Omega\times(0,T],\\
	\label{cuc:1.3}
	u(x,t)=&0 \quad \mbox{on}\quad\partial{\Omega}\times(0,T],\\
	\label{cuc:1.4}
	u(x,0)=&0,\hspace{.3cm}  \quad \mbox{in} \quad \Omega,
	\end{align}
\end{subequations}
where $\Omega$ is a bounded convex polygonal/polyhedron domain in $\mathbb{R}^d$ $(d=1,2,3)$ with boundary $\partial\Omega$ and
$ f(u)$ represents the forcing term.
Here $^C{D}^{\alpha}_{t}\varphi$ denotes the $\alpha^{th}$ order $(0<\alpha< 1)$ Caputo fractional derivative   of the function $\varphi(t)$ and it is defined as
\begin{equation}
\begin{split}
^{C}{D}^{\alpha}_{t}\varphi(t):=\frac{1}{\Gamma(1-\alpha)}\int_{0}^{t}(t-s)^{-\alpha}\frac{\partial \varphi(s)}{\partial s}ds,\\
\end{split}
\end{equation}
where $\Gamma(\cdot)$ is the gamma function.\\
We can express Caputo fractional derivatives in terms of Riemann-Liouville fractional derivatives by the following relation \cite{Kil}
\begin{equation}
^CD^{\alpha}_{t}\varphi(t)= \ ^RD^{\alpha}_{t}\big(\varphi(t)-\varphi(0)\big),
\end{equation}
where the Riemann-Liouville fractional derivative $^RD^{\alpha}_{t}\varphi$ is defined as
\begin{equation}
^RD^{\alpha}_{t}\varphi(t) :=\frac{1}{\Gamma(1-\alpha)}\frac{d}{dt}\int_{0}^{t}(t-s)^{-\alpha}\varphi(s)ds.
\end{equation}
Note that under the condition $\varphi(0)=0,$  Caputo and Riemann Liouville fractional derivatives coincide. Since $u(x,0)=0,$ therefore problem \eqref{cuc:1.1}-\eqref{cuc:1.4} can also be considered in terms of Riemann Liouville fractional derivative.
\par In general, it is difficult to obtain analytical solution for most of the FPDEs. Thus one has to look for efficient numerical methods for solving these type of PDEs. Several numerical methods have been proposed in the literature to solve these equations \cite{Lin,diethelm1997algorithm,lubich1986discretized,lubich1988convolution,dimitrov2013}. These numerical methods are broadly divided into two categories namely, L1-type methods and convolution quadrature. Former methods have been introduced in \cite{Lin,diethelm1997algorithm} and are based on piecewise polynomial interpolation while later methods have been developed by Lubich \cite{lubich1986discretized,lubich1988convolution} and these methods are generated by high-order backward difference formulas.\\
Authors in \cite{jiang2011high,ford2011finite} considered the following linear time-fractional diffusion equation
\begin{equation}\label{linear}
\begin{split}
^{C}{D}^{\alpha}_{t}u(x,t)&-\Delta u(x,t)=f(x,t),  \quad (x,t)\in [0,1]\times(0,T],\\
u(0,t)&=u(1,t)=0,  \quad t\in(0,T],\\
u(x,0)&=u_0(x), \quad x\in[0,1].\\
\end{split}
\end{equation}
They used L1-type of methods for the time discretization and Galerkin finite element methods for space discretization.\\
Further, authors in \cite{D.} solved the following nonlinear diffusion equation  by using L1-Galerkin finite element methods
\begin{equation}\label{nfopde1}
\begin{split}
^{C}{D}^{\alpha}_{t}u-\Delta u&=f(u)  \quad \mbox{in}  \quad \Omega\times(0,T],\\
u(x,t)&=0  \quad \mbox{on}  \quad \partial\Omega\times(0,T],\\
u(x,0)&=u_0(x) \quad \mbox{in} \quad \Omega,\\
\end{split}
\end{equation}
where  $\Omega$ is a bounded convex polygonal/polyhedron domain in $\mathbb{R}^d$ $(d=1,2,3)$ and $f:\mathbb{R}\rightarrow\mathbb{R}$ is a Lipschitz continuous function.
In \cite{D.}, authors also derived the discrete fractional Gronwall type inequality which is significant in the analysis of fractional order parabolic partial differential equations.\\
Authors in \cite{jin2016two} employed convolution quadrature for time discretization and Galerkin finite element methods for space discretization to solve the following fractional diffusion $(0<\alpha<1)$ and diffusion-wave $(1<\alpha<2)$ equation with nonsmooth data 
\begin{equation}\label{linear3d}
\begin{split}
^{C}{D}^{\alpha}_{t}u(x,t)-\Delta u(x,t)&=f(x,t)  \quad \mbox{in}  \quad \Omega\times(0,T],\\
u(x,t)&=0  \quad \mbox{on}  \quad \partial\Omega\times(0,T],\\
u(x,0)&=u_0(x) \quad \mbox{in} \quad \Omega,\\
\end{split}
\end{equation}
where $\Omega$ is a bounded convex polygonal domain in $\mathbb{R}^d$ $(d=1,2,3)$ with boundary $\partial\Omega$. Also, in this work \cite{jin2016two} authors have established the optimal error estimates concerning data regularity. \\
In \cite{Jin}, authors studied nonlinear time-fractional diffusion equation \eqref{nfopde1}. In order to solve this problem, authors applied L1 method as well as backward Euler convolution quadrature commonly known as Gr$\ddot{\mathrm{u}}$nwald-Letnikov approximation. Also, fractional Gronwall type inequality has been developed at continuous as well as discrete levels.\\
Recently, authors in \cite{jin2017analysis} analyzed the fractional Crank-Nicolson-Galerkin finite element scheme, developed by Dimitrov \cite{dimitrov2013}, for solving problem \eqref{linear3d} $(0<\alpha<1)$. In this work, authors have shown $O(\Delta t^2)$ accuracy in time for both smooth and nonsmooth  data.\\
Motivated by the above works, in this paper we present fractional Crank-Nicolson-Galerkin finite element scheme for solving the time-fractional nonlinear diffusion equation. To the best of our knowledge, this is the first attempt to use Crank-Nicolson-Galerkin finite element scheme for solving nonlinear time-fractional diffusion equation. The main contributions of this work are summarized as follows:
\begin{itemize}
	\item[1.] We propose a new fractional Crank-Nicolson-Galerkin finite element scheme for solving the nonlinear time-fractional diffusion equation. Newton's method is employed for linearizing the nonlinear discrete problem.
	\item[2.] Authors in \cite{D.} established an important discrete fractional Gronwall type inequality for L1 approximation to the Caputo fractional derivative. Motivated by this work, in this paper we establish a discrete fractional Gronwall type inequality for backward Euler convolution quadrature (Gr$\mathrm{\ddot{u}}$nwald-Letnikov approximation) to the Riemann-Liouville fractional derivative.
	\item[3.] We prove the well-posedness of the fully-discrete scheme and derive {\it{a priori}} error estimate in $L^2(\Omega)$ norm for the discrete problem. These theoretical results are confirmed via several numerical experiments.
\end{itemize}
The remainder of the article is organized as follows: In Section 2, we review the preliminaries and some known results. Section 3 presents a fully-discrete fractional Crank-Nicolson-Galerkin finite element scheme and discusses the existence-uniqueness of the fully-discrete solution. Section 4 establishes fractional Gronwall type inequality and provides {\it{a priori}} error estimate for the fully-discrete problem in $L^2(\Omega)$ norm. Numerical results which substantiate the theoretical estimates are provided in Section 5. Section 6 concludes the paper.
\section{Preliminaries and some known results}
\noindent In this section, we introduce some definitions and function spaces.\\
Let $L^2(\Omega)$ be the inner product space with inner product $\langle\cdot,\cdot\rangle$ defined by $\langle v,w\rangle=\int_{\Omega}v(x)w(x)dx$ and norm $\|v\|=\big(\int_{\Omega}|v(x)|^2 dx\big)^{\frac{1}{2}}$.
For a nonnegative integer $m,$ $H^{m}(\Omega)$ denotes the usual Sobolev space on domain $\Omega$ with the norm
\begin{equation*}
\|w\|_m=\Big(\sum_{0\leq a\leq m}\Big\| \frac{\partial^a{w}}{\partial x^a}\Big\|^2\Big)^\frac{1}{2}.\\
\end{equation*}
Let $C_0^{\infty}(\Omega)$ be the space of infinitely differentiable function with compact support in $\Omega$ and $H^{m}_{0}(\Omega)$ is the closure of $C_0^{\infty}(\Omega)$ with respect to the norm $\|\cdot\|_m$.\\
Throughout, the notation $C$ denotes a generic constant that may vary at different occurrences.\\
Following hypothesis is needed in the existence-uniqueness of the solution and the error analysis.\\
$H$: The function $f:\mathbb{R} \rightarrow\mathbb{R}$ is Lipschitz continuous with $|f(u_1)-f(u_2)|\leq L |u_1-u_2|,$ for $u_1, $ $u_2$ $\in \mathbb{R},$ and $L>0.$\\
In the following, we state the existence-uniqueness and regularity results for the  problem \eqref{cuc:1.1}-\eqref{cuc:1.4}. The proof of these results  can be found in \cite{Jin}.
\begin{theorem}\label{uniquet}
	Under the hypothesis $H,$ the problem \eqref{cuc:1.1}-\eqref{cuc:1.4} admits a unique solution $u$ for $0<\alpha<1,$ such that
	\begin{equation*}
	\begin{split}
	u\in C^{\alpha}\big([0,T];L^2(\Omega)\big)\cap C\big([0,T];H^1_0(\Omega)\cap H^2(\Omega)\big), & \ \ ^CD^{\alpha}_t u \in C\big([0,T];L^2(\Omega)\big),\\
	\frac{\partial u(t)}{\partial t} \in L^2(\Omega) \ \mbox{and}  \ \  \Big\|\frac{\partial u(t)}{\partial t}\Big\|\leq Ct^{\alpha-1}, \ \  \mbox{for} \ \ t\in (0,T].\\
	\end{split}
	\end{equation*}
\end{theorem}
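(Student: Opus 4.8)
The plan is to recast \eqref{cuc:1.1}--\eqref{cuc:1.4} as an abstract Volterra (mild) equation, to solve it by a fixed-point argument, and then to bootstrap the regularity from the solution representation. First I would set $A=-\Delta$ with $D(A)=H^2(\Omega)\cap H^1_0(\Omega)$; since $\Omega$ is a bounded convex polygon/polyhedron, $A$ is self-adjoint and positive definite with compact inverse (so $D(A)=H^2\cap H^1_0$ with equivalent norms, by elliptic regularity on convex domains) and possesses an orthonormal eigensystem $\{(\lambda_j,\phi_j)\}$ with $0<\lambda_1\le\lambda_2\le\cdots\to\infty$. I would then introduce the Mittag-Leffler operators
\[
E(t)v:=\sum_{j\ge1}E_{\alpha,1}(-\lambda_j t^\alpha)\langle v,\phi_j\rangle\phi_j,\qquad
\bar E(t)v:=t^{\alpha-1}\sum_{j\ge1}E_{\alpha,\alpha}(-\lambda_j t^\alpha)\langle v,\phi_j\rangle\phi_j,
\]
and record, via the functional calculus and the elementary bounds $|E_{\alpha,\beta}(-x)|\le C(1+x)^{-1}$ for $x\ge0$, the smoothing estimates $\|A^{\nu}E(t)\|\le Ct^{-\alpha\nu}$, $\|A^{\nu}\bar E(t)\|\le Ct^{\alpha-1-\alpha\nu}$ ($0\le\nu\le1$) and $\|\bar E'(t)\|\le Ct^{\alpha-2}$, together with the identities $\int_0^t\bar E(s)\,ds=A^{-1}(I-E(t))$ and $E'(t)=-A\bar E(t)$. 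Because $u(x,0)=0$, Laplace transform / Duhamel's principle (as in \cite{Jin}) shows that $u$ solves \eqref{cuc:1.1}--\eqref{cuc:1.4} if and only if $u(t)=\int_0^t\bar E(t-s)f(u(s))\,ds$.

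For existence and uniqueness I would define $\mathcal{T}v(t):=\int_0^t\bar E(t-s)f(v(s))\,ds$ on $X:=C([0,T];L^2(\Omega))$ and use hypothesis $H$ together with $\|\bar E(\tau)\|\le C\tau^{\alpha-1}$ to get
\[
\|\mathcal{T}v(t)-\mathcal{T}w(t)\|\le CL\int_0^t(t-s)^{\alpha-1}\|v(s)-w(s)\|\,ds;
\]
iterating this inequality (equivalently, working with the weighted norm $\sup_{t\in[0,T]}e^{-\mu t}\|v(t)\|$ for $\mu$ large, or invoking the generalized Gronwall lemma) shows that some power of $\mathcal{T}$ is a contraction on $X$, hence $\mathcal{T}$ has a unique fixed point $u\in X$, which is the unique solution of \eqref{cuc:1.1}--\eqref{cuc:1.4}; a direct estimate on $\|\mathcal{T}u(t)\|$ then gives $\sup_{[0,T]}\|u(t)\|\le C$.

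The regularity would follow by a bootstrap. Since $u\in X$ and $f$ is Lipschitz, $h:=f(u(\cdot))\in X$; estimating $u(t)-u(s)$ from the representation by means of $\|\bar E(\tau)\|\le C\tau^{\alpha-1}$ and $\|\bar E'(\tau)\|\le C\tau^{\alpha-2}$ yields $u\in C^{\alpha}([0,T];L^2(\Omega))$, whence $h\in C^{\alpha}([0,T];L^2(\Omega))$. Next I would split
\[
Au(t)=\int_0^t A\bar E(t-s)\bigl(h(s)-h(t)\bigr)\,ds+\bigl(I-E(t)\bigr)h(t),
\]
noting that the second term is bounded in $L^2$ and that in the first one $\|A\bar E(t-s)\|\,\|h(s)-h(t)\|\le C(t-s)^{-1}(t-s)^{\alpha}=C(t-s)^{\alpha-1}$ is integrable, so $u(t)\in D(A)$ with $\sup_{[0,T]}\|Au(t)\|\le C$; a dominated-convergence argument using $h\in C^{\alpha}$ upgrades this to $u\in C([0,T];H^1_0\cap H^2)$, and the equation then gives ${}^{C}D_t^{\alpha}u=-Au+f(u)\in C([0,T];L^2(\Omega))$. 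For the pointwise time derivative I would write $u(t)=A^{-1}(I-E(t))h(0)+w(t)$ with $w(t)=\int_0^t\bar E(t-s)\bigl(h(s)-h(0)\bigr)\,ds$; differentiating the first term gives $\bar E(t)h(0)$, of size $Ct^{\alpha-1}\|h(0)\|$, while $w$ corresponds to a source vanishing like $s^{\alpha}$ at the origin and, by the same splitting/centering technique applied once more, contributes a less singular remainder, so that $\partial_t u(t)\in L^2(\Omega)$ with $\|\partial_t u(t)\|\le Ct^{\alpha-1}$ on $(0,T]$.

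The main obstacle is the behaviour near $t=0$ combined with the fact that $f$ is only Lipschitz, not $C^1$: the naive smoothing bound $\|A\bar E(\tau)\|\le C\tau^{-1}$ is \emph{not} integrable, and $\partial_t u$ cannot be produced by differentiating under the integral sign. Both difficulties are overcome only by exploiting the self-improving Hölder continuity of $t\mapsto f(u(t))$ — which forces the bootstrap to be carried out in the order $L^2\to C^{\alpha}([0,T];L^2)\to C([0,T];H^1_0\cap H^2)$ — and by the integration-by-parts identities for $\bar E$ and $E$; the detailed estimates are those of \cite{Jin}.
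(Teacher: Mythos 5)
The paper itself does not prove Theorem \ref{uniquet}; it is quoted with a pointer to \cite{Jin}, so the only meaningful comparison is with the argument of that reference, and your proposal is essentially a reconstruction of it: the eigenfunction-based Mittag-Leffler solution operators $E(t)$, $\bar E(t)$ with their smoothing estimates, the mild formulation $u(t)=\int_0^t\bar E(t-s)f(u(s))\,ds$ (legitimate here because $u(0)=0$), a fixed point in $C([0,T];L^2(\Omega))$ driven by the weakly singular kernel $(t-s)^{\alpha-1}$, and the bootstrap $L^2\to C^{\alpha}([0,T];L^2)\to C([0,T];H^1_0\cap H^2)$, which then gives $^CD^{\alpha}_t u=-Au+f(u)\in C([0,T];L^2)$. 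That part of your sketch is correct and is the standard route.

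The one step that does not close as written is the bound $\|\partial_t u(t)\|\le Ct^{\alpha-1}$. Centering at $h(0)$ only disposes of the term $A^{-1}(I-E(t))h(0)$, whose derivative is $\bar E(t)h(0)=O(t^{\alpha-1})$; for the remainder $w(t)=\int_0^t\bar E(t-s)\bigl(h(s)-h(0)\bigr)\,ds$ you cannot ``apply the same splitting once more'': differentiating under the integral would require $h'$, which you do not have ($f$ is only Lipschitz and $u$ only $C^{\alpha}$), and the brute-force estimate $\int_0^t\|\bar E'(t-s)\|\,\|h(s)-h(0)\|\,ds\le C\int_0^t(t-s)^{\alpha-2}s^{\alpha}\,ds$ diverges, since $(t-s)^{\alpha-2}$ is not integrable near $s=t$; centering at $h(t)$ instead reintroduces a $t$-dependence that would again have to be differentiated. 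The way this is actually handled in \cite{Jin} is a difference-quotient argument: write $u(t+\delta)-u(t)=\int_0^{\delta}\bar E(t+\delta-s)h(s)\,ds+\int_0^t\bar E(t-\sigma)\bigl(h(\sigma+\delta)-h(\sigma)\bigr)\,d\sigma$, bound the first term by $C\delta t^{\alpha-1}$, use the Lipschitz property of $f$ on the second, and apply the weakly singular (fractional) Gronwall inequality to get $\|u(t+\delta)-u(t)\|\le C\delta t^{\alpha-1}$ uniformly in $\delta$, then pass to the limit. With that replacement your outline matches the cited proof.
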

\section{Fractional Crank-Nicolson-Galerkin finite element scheme}
Let $\mathcal{T}_h$ be the partition of $\Omega$ into disjoint triangles $T_k$ with  mesh size $h$ such that no vertex of any triangle lies in the interior of a side of another triangle. Let $X_h$ be the finite-dimensional subspace of $H^{1}_{0}(\Omega)$ consisting of continuous functions on closure $\bar{\Omega}$ of $\Omega$ which are linear in each triangle ${T}_k$ and vanishes on $\partial\Omega$
\begin{equation*}
X_h:=\{v\in C_{0}(\bar{\Omega}): v_{|T_{k}} \ \text{is \ a \ linear \ polynomial} \ \forall \ T_{k}\in \mathcal{T}_{h}\}.
\end{equation*}
Let $\{P_i\}_{i=1}^{M}$ be the interior vertices of $\mathcal{T}_h$ and $\phi_i(x)$ be the pyramid function in $X_h$ which takes the value one at each interior vertex but vanishes at other vertices. Then $\{\phi_i(x)\}_{i=1}^{M}$ forms a basis for the space $X_h.$ For the fully-discrete scheme, we also need to approximate the time-fractional derivative.
Assume that $0=t_0<t_1<...<t_N=T$ be a given partition of time interval [0,T] with step length $\Delta t=\frac{T}{N}$ for some positive integer $N$.
The Gr$\ddot{\mathrm{u}}$nwald-Letnikov approximation to the Riemann-Liouville fractional derivatives is given by

\begin{equation}\label{A9}
^RD^{\alpha}_{t_n}u ={\Delta t}^{-\alpha}\sum_{i=0}^{n} w^{(\alpha)}_{n-i} u(x,t_i)+E_{n}, \quad \mbox{with}\quad\sum_{i=0}^{\infty} w^{(\alpha)}_{i}\xi^i=(1-\xi)^{\alpha}.
\end{equation}
Here the weights $w^{(\alpha)}_i$ are computed as: $w^{(\alpha)}_i=(-1)^i\frac{\Gamma(\alpha+1)}{\Gamma(i+1)\Gamma(\alpha-i+1)}.$\\
The truncation error $E_{n}$ in \eqref{A9}  satisfies the following estimate \cite{Podlubny}
\begin{equation}\label{trunction1}
\|E_n\|\leq C{\Delta}t.
\end{equation}
The approximation given in \eqref{A9}  is $O(\Delta t)$ accurate. In order to obtain $O(\Delta t^2)$ accuracy, authors in \cite{jin2017analysis} derived the following approximation to the Riemann-Liouville fraction derivative $^RD^{\alpha}_{t}$ at point $t= t_{n-\frac{\alpha}{2}}$
\begin{equation}\label{A10}
^RD^{\alpha}_{\Delta t}u(x,t_n)= \ ^RD^{\alpha}_{t_{n-\frac{\alpha}{2}}}u(x,t)+O(\Delta t^2),
\end{equation}
where $^RD^{\alpha}_{\Delta t}$ is the discrete fractional differential operator defined as
\begin{equation}\label{discreteope}
^RD^{\alpha}_{\Delta t}u(x,t_n):={\Delta t}^{-\alpha}\sum_{i=0}^{n} w^{(\alpha)}_{n-i} u(x,t_i).
\end{equation}
\begin{remark}\label{remark}
	The approximation \eqref{A10} was first observed by Dimitrov \cite{dimitrov2013} under the regularity assumption and certain compatibility conditions, i.e., $u \in C^4[0,T]$, $u(0)=0$, $u_t(0)=0$, $u_{tt}(0)=0$. In this work, we establish the theoretical and numerical results based on the regularity assumption and these compatibility conditions for the solution $u$ of the problem \eqref{cuc:1.1}-\eqref{cuc:1.4}. 
\end{remark}
Now, we present fractional Crank-Nicolson-Galerkin finite element scheme to solve the problem \eqref{cuc:1.1}-\eqref{cuc:1.4}. For convenience, set $U_h^{n,\alpha}=(1-\frac{\alpha}{2})U_h^{n}+\frac{\alpha}{2}U_h^{n-1}.$
Then the scheme is to find $U_h^{n}\in X_h$ such that for each $n=1,2,...,N,$ we have
\begin{equation} \label{fully discrete}
\begin{split}
\langle ^RD^{\alpha}_{\Delta t}U_h^{n}, w_{h}\rangle+\langle\nabla U_h^{n,\alpha}, \nabla w_{h}\rangle&=\langle f(U_h^{n,\alpha}), w_{h}\rangle,\quad \forall \  w_{h}\in X_{h},\\
U_h^0=&0.\\
\end{split}
\end{equation}
Clearly for $\alpha=1$ scheme \eqref{fully discrete} recovers the classical Crank-Nicolson scheme. Thus fractional Crank-Nicolson scheme can be seen as an extension of the classical Crank-Nicolson scheme for the fractional order partial differential equations.\\
Again, from the definition of discrete fractional operator $^RD^{\alpha}_{\Delta t},$ we can rewrite equation  (\ref{fully discrete}) as follows
\begin{equation}\label{fully22}
\begin{split}
{\Delta t^{-\alpha}}w_0^{(\alpha)}\langle U_h^{n},w_h\rangle+\langle\nabla U_h^{n,\alpha}, \nabla w_{h}\rangle&=\langle f(U_h^{n,\alpha}), w_{h}\rangle-{\Delta t^{-\alpha}}\sum_{j=1}^{n-1}w_{n-j}^{(\alpha)}\langle U_h^{j},w_h\rangle.
\end{split}
\end{equation}
The discrete formulation \eqref{fully discrete}  (or \eqref{fully22}) gives us a system of nonlinear equations. For solving this system of nonlinear equations, we use Newton's method. First, we recall $\{\phi_i \}_{1\leq i\leq M}$ is the $M$ dimensional basis of $X_h$ associated with nodes of $\mathcal{T}_h.$ For some $\beta_{i}^n$ we can write solution $U_h^n$ of \eqref{fully discrete} (or \eqref{fully22}) as
\begin{equation}\label{values}
U_h^n=\sum_{i=1}^{M}\beta_{i}^n\phi_i.
\end{equation}
Define $\mathbold{\beta^n}$ := $[\beta_1^n, \beta_2^n,...,\beta_M^n]^{\prime}$.
Now using the value of $U_h^n$ from \eqref{values} in \eqref{fully22}, we get the following nonlinear algebraic equation
\begin{equation}\label{newton}
\begin{split}
H_{i}(\mathbold{\beta^n})=& H_{i}(U_h^n)=0,\quad 1\leq i\leq M,
\end{split}
\end{equation}
where
\begin{equation}
\begin{split}
H_{i}(U_h^n)={\Delta t^{-\alpha}}w_0^{(\alpha)}\langle U_h^{n},\phi_i\rangle &+\langle\nabla U_h^{n,\alpha}, \nabla \phi_i\rangle-\langle f(U_h^{n,\alpha}), \phi_i\rangle\\
&+{\Delta t^{-\alpha}}\sum_{j=1}^{n-1}w_{n-j}^{(\alpha)}\langle U_h^{j},\phi_i\rangle.\\
\end{split}
\end{equation}
If we use Newton's method in \eqref{newton}, we get following matrix system for the correction term\\
\begin{equation}
\begin{split}
\mathbold{J} \mathbold{\beta^n} = \mathbold{H},
\end{split}
\end{equation}
where $\mathbold{H}=[H_{1}, H_{2},...,H_{M}]^{\prime},$  and entries of $\mathbold{J}=\mathbold{J}_{(M\times M)},$ is given below
\begin{equation*}
\begin{split}
(\mathbold{J})_{li}=\frac{\partial H_{i}}{\partial \beta_{l}^{n}}(U_h^n, V_h^n)&={\Delta t^{-\alpha}}w_0^{(\alpha)}\langle \phi_l,\phi_i\rangle +\left(1-\frac{\alpha}{2}\right)\langle \nabla \phi_l , \nabla \phi_i\rangle\\
&\hspace{2cm}-\left(1-\frac{\alpha}{2}\right)\Big\langle \frac{\partial f(U_h^{n,\alpha})}{\partial U^{n}_{h}}\phi_l, \phi_i\Big\rangle,\\
\end{split}
\end{equation*}
where  $1\leq i,\ l\leq M.$\\
Next, we prove the existence and uniqueness of the solution for the fully-discrete problem. The following proposition, which is a consequence of the Brouwer fixed point theorem is required to prove the existence and uniqueness of the fully-discrete solution.
\begin{proposition}\cite{sk1,Thomee}\label{Brouwer}
	Let $\mathcal{H}$ be a finite-dimensional Hilbert space with scalar product $\langle\cdot,\cdot\rangle$ and norm $|\cdot|.$ Let $S:\mathcal{H}\rightarrow\mathcal{H}$ be a continuous map such that
	\begin{equation*}
	\langle S(v),v\rangle\textgreater 0 \quad \forall \  v\in \mathcal{H}\quad \mbox{with}\quad |v|=\rho,\quad \rho\textgreater 0.\\
	\end{equation*}
	Then, there exists $w\in\mathcal{H}$ such that
	\begin{equation*}
	S(w)=0 \quad \mbox{and}\quad |w|\textless \rho.
	\end{equation*}
\end{proposition}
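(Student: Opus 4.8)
The plan is to argue by contradiction and reduce the statement to the classical Brouwer fixed point theorem. Suppose, contrary to the assertion, that $S(v)\neq 0$ for every $v$ in the closed ball $\overline{B}=\{v\in\mathcal{H}:|v|\le\rho\}$. The first observation is that the hypothesis already forbids zeros on the sphere $|v|=\rho$, since there $\langle S(v),v\rangle>0$ forces $S(v)\neq 0$; so the negation above is genuinely an assumption only about the open ball. Under it, the map
\begin{equation*}
g(v):=-\rho\,\frac{S(v)}{|S(v)|}
\end{equation*}
is well defined and continuous on $\overline{B}$, because $S$ is continuous and $|S(v)|>0$ throughout $\overline{B}$; moreover $|g(v)|=\rho$, so $g$ maps $\overline{B}$ into itself (in fact onto the boundary sphere).

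Next I would invoke Brouwer's theorem. Since $\mathcal{H}$ is finite-dimensional, fixing an orthonormal basis identifies $(\mathcal{H},\langle\cdot,\cdot\rangle)$ isometrically with $\mathbb{R}^m$, $m=\dim\mathcal{H}$, equipped with the Euclidean inner product, under which $\overline{B}$ corresponds to a closed Euclidean ball, i.e.\ a nonempty compact convex set. Hence the continuous self-map $g$ of $\overline{B}$ has a fixed point $w=g(w)$.

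Finally I would extract the contradiction by testing $S(w)$ against $w$. From $w=g(w)$ we get $|w|=\rho$ and
\begin{equation*}
\langle S(w),w\rangle=\Big\langle S(w),\,-\rho\,\frac{S(w)}{|S(w)|}\Big\rangle=-\frac{\rho}{|S(w)|}\,\langle S(w),S(w)\rangle=-\rho\,|S(w)|<0,
\end{equation*}
which contradicts the assumed positivity of $\langle S(v),v\rangle$ on the sphere $|v|=\rho$. Therefore $S$ must vanish at some $w\in\overline{B}$; and since the hypothesis rules out zeros on the boundary sphere, every such $w$ satisfies $|w|<\rho$, which is exactly the claim.

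As for difficulty, there is no genuinely hard step here: the only points requiring care are verifying that the normalized map $g$ is truly continuous and $\overline{B}$-valued (which is precisely why one needs the no-zero assumption on the whole ball, not just a subset), and performing the routine identification of a finite-dimensional real Hilbert space with Euclidean space so that Brouwer's theorem applies verbatim. The one piece of bookkeeping worth highlighting is the source of the strict inequality $|w|<\rho$, namely the fact that the sign condition in the hypothesis already excludes zeros on $\{|v|=\rho\}$.
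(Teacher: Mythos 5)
Your proof is correct and is the standard argument: the paper does not prove this proposition itself but cites it from the literature, and the classical proof given there is exactly your contradiction argument via the normalized map $g(v)=-\rho\,S(v)/|S(v)|$ and Brouwer's fixed point theorem on the closed ball. The only point worth noting is that the usual statement assumes only $\langle S(v),v\rangle\geq 0$ on the sphere and concludes $|w|\leq\rho$; your observation that the strict inequality in the hypothesis excludes zeros on the sphere, and hence upgrades the conclusion to $|w|<\rho$, correctly accounts for the form stated here.
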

\begin{theorem}
	Let $U^0_{h},\ U^1_{h},..., U^{n-1}_{h}$ are given. Then for all $1\leq n\leq N,$ there exists a unique solution $U^n_{h}$ of the problem \eqref{fully discrete}.
\end{theorem}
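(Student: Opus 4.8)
The plan is to prove existence via Proposition \ref{Brouwer} and uniqueness by a direct energy argument. Throughout, $U_h^0,\dots,U_h^{n-1}$ are regarded as fixed known data and the only unknown is $v:=U_h^n\in X_h$, so that $U_h^{n,\alpha}=(1-\tfrac{\alpha}{2})v+\tfrac{\alpha}{2}U_h^{n-1}$. First I would recast \eqref{fully22} as a root-finding problem: using Riesz representation in the finite-dimensional inner-product space $X_h$, define $S\colon X_h\to X_h$ by requiring, for every $w_h\in X_h$,
\[
\langle S(v),w_h\rangle = \Delta t^{-\alpha}w_0^{(\alpha)}\langle v,w_h\rangle + \langle\nabla U_h^{n,\alpha},\nabla w_h\rangle - \langle f(U_h^{n,\alpha}),w_h\rangle + \Delta t^{-\alpha}\sum_{j=1}^{n-1}w_{n-j}^{(\alpha)}\langle U_h^j,w_h\rangle .
\]
Then $U_h^n$ solves \eqref{fully discrete} if and only if $S(U_h^n)=0$, and since $f$ is Lipschitz (hypothesis $H$) hence continuous, while every other term is linear in $v$, the map $S$ is continuous.

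Next I would verify the positivity condition $\langle S(v),v\rangle>0$ on a sphere $\|v\|=\rho$ of large radius. Taking $w_h=v$ and using $w_0^{(\alpha)}=1$, the mass term contributes $\Delta t^{-\alpha}\|v\|^2$; the gradient term expands as $(1-\tfrac{\alpha}{2})\|\nabla v\|^2+\tfrac{\alpha}{2}\langle\nabla U_h^{n-1},\nabla v\rangle$, whose cross term is absorbed by Young's inequality into half of $(1-\tfrac{\alpha}{2})\|\nabla v\|^2$ plus a constant depending only on $U_h^{n-1}$; the convolution history is bounded by $C\|v\|$ with $C$ depending only on the known $U_h^j$ and the weights $w_i^{(\alpha)}$; and, using $\|f(U_h^{n,\alpha})\|\le \|f(0)\|\,|\Omega|^{1/2}+L\|U_h^{n,\alpha}\|$, the nonlinear term is bounded below by $-L(1-\tfrac{\alpha}{2})\|v\|^2-C\|v\|-C$. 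Collecting the estimates gives
\[
\langle S(v),v\rangle \ \ge\ \bigl(\Delta t^{-\alpha}-L(1-\tfrac{\alpha}{2})\bigr)\|v\|^2 - C_1\|v\| - C_2,
\]
with $C_1,C_2\ge 0$ depending only on the data $U_h^0,\dots,U_h^{n-1}$, $f(0)$, $\Omega$ and $\alpha$. Provided $\Delta t$ is small enough that $\Delta t^{\alpha}L(1-\tfrac{\alpha}{2})<1$, the leading coefficient is positive, so $\langle S(v),v\rangle>0$ once $\rho$ exceeds the larger root of that quadratic; Proposition \ref{Brouwer} then produces $U_h^n\in X_h$ with $S(U_h^n)=0$, i.e.\ a solution of \eqref{fully discrete}.

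For uniqueness I would take two solutions $U_h^n,\widetilde U_h^n$, set $e=U_h^n-\widetilde U_h^n$ (so $U_h^{n,\alpha}-\widetilde U_h^{n,\alpha}=(1-\tfrac{\alpha}{2})e$, since the known $U_h^{n-1}$ cancels), subtract the two instances of \eqref{fully discrete} and test with $w_h=e$. The convolution sum and the Galerkin source both cancel, leaving $\Delta t^{-\alpha}\|e\|^2+(1-\tfrac{\alpha}{2})\|\nabla e\|^2=\langle f(U_h^{n,\alpha})-f(\widetilde U_h^{n,\alpha}),e\rangle\le L(1-\tfrac{\alpha}{2})\|e\|^2$, whence $\bigl(\Delta t^{-\alpha}-L(1-\tfrac{\alpha}{2})\bigr)\|e\|^2\le 0$ and $e=0$ under the same restriction on $\Delta t$. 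The main obstacle — and really the only delicate point — is obtaining a clean lower bound for the nonlinear term: everything depends on pairing the $-\langle f(\cdot),v\rangle$ contribution against the strictly positive mass term $\Delta t^{-\alpha}\|v\|^2$, which forces the (standard, and in practice harmless) step-size restriction $\Delta t^{\alpha}L(1-\tfrac{\alpha}{2})<1$; unconditional well-posedness would instead require a one-sided growth/monotonicity condition on $f$ rather than plain Lipschitz continuity.
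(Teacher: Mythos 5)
Your proposal is correct and takes essentially the same route as the paper: existence via the Brouwer-type Proposition applied to a continuous map defined from the scheme, with the Lipschitz bound on $f$ plus a step-size restriction of the form $\Delta t^{\alpha}L<C$ yielding positivity on a large sphere, and uniqueness by subtracting two solutions, testing with their difference, and using the same smallness of $\Delta t^{\alpha}$. The only cosmetic difference is that the paper rescales the equation and works with the averaged unknown $X^{n,\alpha}$, whereas you keep $U_h^{n}$ itself as the unknown and absorb the cross term $\langle\nabla U_h^{n-1},\nabla v\rangle$ by Young's inequality, which is equally valid.
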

\begin{proof}
	Rewriting equation \eqref{fully22}  as follows
	\begin{equation}\label{fullyd}
	\begin{split}
	\langle U_h^{n},w_h\rangle+{\Delta t}^{\alpha}\langle \nabla U_h^{n,\alpha}, \nabla w_{h}\rangle& -{\Delta t}^{\alpha} \langle f(U_h^{n,\alpha}), w_{h}\rangle+ \sum_{j=1}^{n-1}w_{n-j}^{(\alpha)}\langle U_h^{j},w_h\rangle=0.
	\end{split}
	\end{equation}
	Multiplying by $(1-\frac{\alpha}{2})$ in \eqref{fullyd} to get
	\begin{equation}\label{fullyd2}
	\begin{split}
	\langle U_h^{n,\alpha},w_h\rangle+{\Delta t}^{\alpha}\langle \nabla U_h^{n,\alpha}, \nabla w_{h}\rangle& -{\Delta t}^{\alpha} \langle f(U_h^{n,\alpha}), w_{h}\rangle\\
	&+ \sum_{j=1}^{n-1}w_{n-j}^{(\alpha)}\langle U_h^{j},w_h\rangle-\frac{\alpha}{2}\langle U_h^{n-1},w_h\rangle=0.
	\end{split}
	\end{equation}
	Equations \eqref{fullyd} and \eqref{fullyd2} are equivalent in the sense that the solution of \eqref{fullyd} is the solution of \eqref{fullyd2} and vice-versa.\\
	Define operator $G:X_h \rightarrow X_h$ such that
	
	\begin{equation}\label{fixedmap}
	\begin{split}
	\langle G(X^{n,\alpha}),W\rangle:=\langle X^{n,\alpha},W\rangle &+{\Delta t}^{\alpha}\langle\nabla X^{n,\alpha},\nabla W\rangle-{\Delta t}^{\alpha}\langle f(X^{n,\alpha}),W\rangle\\
	&+\sum_{j=1}^{n-1}w^{(\alpha)}_{n-j}\langle U^{j}_{h},W\rangle-\frac{\alpha}{2}\langle U_h^{n-1},W\rangle.
	\end{split}
	\end{equation}
	It is clear that $G$ is a continuous map. By choosing $W=X^{n,\alpha}$ in equation \eqref{fixedmap}, we get
	\begin{equation}\label{A5}
	\begin{split}
	\langle G(X^{n,\alpha}),&X^{n,\alpha}\rangle=\langle X^{n,\alpha},X^{n,\alpha}\rangle+\Delta t^{\alpha}\langle\nabla X^{n,\alpha}, \nabla X^{n,\alpha}\rangle\\
	&\hspace{2cm}-\Delta t^{\alpha}\langle f(X^{n,\alpha}),X^{n,\alpha}\rangle
	+\sum_{j=1}^{n-1}w_{n-j}^{(\alpha)}\langle U^{j}_{h},X^{n,\alpha}\rangle-\frac{\alpha}{2}\langle U_h^{n-1},X^{n,\alpha}\rangle.
	\end{split}
	\end{equation}
	From the hypothesis $H,$ we have
	\begin{equation*}
	\|f(X^{n,\alpha})\|\leq L\|X^{n,\alpha}\|+\|f(0)\|.
	\end{equation*}
	This shows that
	\begin{equation}\label{f1bound}
	\|f(X^{n,\alpha})\|\leq a (1+\|X^{n,\alpha}\|), \quad \ a>0.
	\end{equation}
	Using Cauchy-Schwarz inequality in \eqref{A5} together with \eqref{f1bound} and $w_j^{(\alpha)}\textless 0,$ $1\leq j\leq n$, we obtain
	\begin{equation}\label{A6}
	\begin{split}
	\langle G(X^{n,\alpha}),X^{n,\alpha}\rangle&\geq\|X^{n,\alpha}\|^2+\Delta t^{\alpha}\|\nabla X^{n,\alpha}\|^2\\
	&\hspace{1cm}-\Delta t^{\alpha} a(1+\|X^{n,\alpha}\|)\|X^{n,\alpha}\|\\
	&\hspace{1.5cm}+\sum_{j=1}^{n-1}w_{n-j}^{(\alpha)}\|U^{j}_{h}\|\|X^{n,\alpha}\|-\frac{\alpha}{2} \|U_h^{n-1}\|\|X^{n,\alpha}\|.
	\end{split}
	\end{equation}
	Since $\Delta t^{\alpha}\|\nabla X^n\|\textgreater 0,$ \eqref{A6} can be written as
	\begin{equation*}
	\begin{split}
	\langle G(X^{n,\alpha}),X^{n,\alpha}\rangle &\geq\Big((1-\Delta t^{\alpha} a)\|X^{n,\alpha}\|-\Delta t^{\alpha} a\\
	&\hspace{1cm}+\sum_{j=1}^{n-1}w_{n-j}^{(\alpha)}\|U^{j}_{h}\|-\frac{\alpha}{2}\|U_h^{n-1}\|\Big)\|X^{n,\alpha}\|.
	\end{split}
	\end{equation*}
	Then $\langle G(X^{n,\alpha}),X^{n,\alpha}\rangle>0,$ if the following result holds
	\begin{equation*}
	\begin{split}
	(1-\Delta t^{\alpha} a)\|X^{n,\alpha}\|-\Delta t^{\alpha} a+\sum_{j=1}^{n-1}w_{n-j}^{(\alpha)}\|U^{j}_{h}\|-\frac{\alpha}{2}\|U_h^{n-1}\|&>0.
	\end{split}
	\end{equation*}
	Choosing $\Delta t^{\alpha}<\frac{1}{a},$ then $\exists$ $X^{n,\alpha}$ such that
	\begin{equation*}
	\begin{split}
	\|X^{n,\alpha}\|&>\frac{1}{(1-\Delta t^{\alpha} a)}\Big(\Delta t^{\alpha} a
	-\sum_{j=1}^{n-1}w^{(\alpha)}_{n-j}\|U^{j}_{h}\|+\frac{\alpha}{2}\|U_h^{n-1}\|\Big),
	\end{split}
	\end{equation*}
	which implies  $\langle G(X^{n,\alpha}),X^{n,\alpha}\rangle>0.$ Thus for
	$\|X^{n,\alpha}\|=\rho,$ we have\\
	\begin{equation*}
	\begin{split}
	\langle G(X^{n,\alpha}),X^{n,\alpha}\rangle>0.
	\end{split}
	\end{equation*}
	Hence, the existence of discrete solution is assured from Proposition \ref{Brouwer}.
	\par Next, we prove the uniqueness of the solution $U^{n,\alpha}_{h}$ for the problem \eqref{fullyd2}. Assume that $U^{n,\alpha}_{h1}$ and $U^{n,\alpha}_{h2}$ are two solutions of the problem \eqref{fullyd2}. For simplicity, we denote $U_1=U^{n,\alpha}_{h1},$ $U_2=U^{n,\alpha}_{h2}.$ Then from \eqref{fullyd2}, we obtain
	\begin{equation}\label{unique}
	\begin{split}
	\langle U_1-U_2,w_h\rangle+\Delta t^{\alpha}\langle \nabla (U_1-U_2), \nabla w_{h}\rangle &=\Delta t^{\alpha}\langle f(U_1)-f(U_2), w_{h}\rangle.
	\end{split}
	\end{equation}
	Setting $w_h=U_1-U_2=r$ in \eqref{unique} and using hypothesis $H,$ we obtain
	\begin{equation}\label{dd1}
	\|r\|^2\leq \Delta t^{\alpha}L(\|r\|)\|r\|.\\
	\end{equation}
	Taking $\Delta t^{\alpha}<\frac{1}{L}$ sufficiently small, we get
	\begin{equation*}
	\|r\|^2 \leq 0.\\
	\end{equation*}
	This completes the proof.
\end{proof}
\section{{\it{\textbf{A priori}}} bound and error analysis}
Here, we provide {\it{a priori}} bound and {\it{a priori}} error estimate for the fully-discrete scheme \eqref{fully discrete}. For the derivation of these estimates, we require certain results (e.g., fractional Gronwall type inequality) which are discussed below.\\
First, we recall the Gr$\ddot{\mathrm{u}}$nwald weights $w^{(\alpha)}_i$ and these weights can be recursively computed via $w^{(\alpha)}_0=1,$ and
\begin{equation*}
w^{(\alpha)}_i=\left(1-\frac{\alpha+1}{i}\right)w^{(\alpha)}_{i-1}, \quad \mbox{for} \ i\geq 1.\\
\end{equation*}
Define $g_{n}^{(\alpha)}:=\sum_{i=0}^{n} w^{(\alpha)}_i$, then $g^{(\alpha)}_0=w^{(\alpha)}_0$ and $w^{(\alpha)}_i=g^{(\alpha)}_i - g^{(\alpha)}_{i-1}$
for $1\leq i\leq n.$ Since weights $w^{(\alpha)}_i$ possess following properties
\begin{equation}
w^{(\alpha)}_0=1, \quad -1<w^{(\alpha)}_1<w^{(\alpha)}_2<...<w^{(\alpha)}_i<...<0,\quad \sum_{i=0}^{\infty}w^{(\alpha)}_i=0, 
\end{equation}
therefore $g^{(\alpha)}_{i-1} > g^{(\alpha)}_{i}$ for $i\geq 1$.
By using the definition of $g^{(\alpha)}_n,$  equation \eqref{discreteope}  can be re-written  as  
\begin{equation}\label{A7}
^RD^{\alpha}_{\Delta t}u(x,t_n)={\Delta t}^{-\alpha}\sum_{i=1}^{n} (g_i^{(\alpha)}-g_{i-1}^{(\alpha)})u(x,t_{n-i})+\Delta t^{-\alpha} g_0^{(\alpha)}u(x,t_{n}).
\end{equation}
Using the fact $u(x,t_0)=0,$ from \eqref{A7} we get
\begin{equation}\label{weight}
^RD^{\alpha}_{\Delta t}u(x,t_n)={\Delta t}^{-\alpha}\sum_{i=1}^{n} g_{n-i}^{(\alpha)}\delta u(x,t_{i}),
\end{equation}
where $\delta u(x,t_{i})=u(x,t_{i})-u(x,t_{i-1})$ $\forall \ i=1,...,n.$\\
With the help of above notations, we now prove some results which are useful in the derivation of fractional Gronwall type inequality and error analysis. For proving these results, we basically borrow the idea given in \cite{D.}. 
\begin{lemma}\label{dkl}
	Consider the sequence $\{\phi_n\}$ given by 
	\begin{equation}
	\phi_0=1, \quad \phi_n=\sum_{i=1}^{n}(g^{(\alpha)}_{i-1}-g^{(\alpha)}_i) \ \phi_{n-i},\quad n\geq 1.
	\end{equation}
	Then $\{\phi_n\}$ satisfies the following properties
	\begin{equation}\label{d1}
	(i) \  0<\phi_n<1, \quad \sum_{i=j}^{n}\phi_{n-i} \  g^{(\alpha)}_{i-j}=1, \quad 1\leq j\leq n, \hspace{3cm}
	\end{equation}
	\begin{equation}\label{d2}
	(ii) \  \frac{1}{\Gamma (\alpha)} \sum_{i=1}^{n}\phi_{n-i} \leq \frac{n^{\alpha}}{\Gamma (1+\alpha)}, \hspace{6.4cm}
	\end{equation}
	\begin{equation}\label{d3}
	(iii) \ \frac{1}{\Gamma (\alpha) \Gamma (1+(k-1)\alpha)} \sum_{i=1}^{n-1}\phi_{n-i}\ i^{(k-1)\alpha} \leq \frac{n^{k \alpha}}{\Gamma (1+ k \alpha)},  
	\hspace{2cm}
	\end{equation} 
	where $k=1,2,...$.
\end{lemma}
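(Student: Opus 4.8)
The plan is to establish \eqref{d1}, \eqref{d2}, \eqref{d3} in turn, using the generating-function identity $\sum_{i\ge0}w^{(\alpha)}_i\xi^i=(1-\xi)^{\alpha}$ from \eqref{A9} together with the sign and monotonicity properties of the Gr\"unwald weights recalled just before the lemma. For the first assertion of \eqref{d1} I would induct on $n$. Positivity is immediate: $\phi_0=1>0$, and since $g^{(\alpha)}_{i-1}-g^{(\alpha)}_i=-w^{(\alpha)}_i>0$ for $i\ge1$, the recurrence writes $\phi_n$ as a positive combination of $\phi_0,\dots,\phi_{n-1}$. For the upper bound, note that $g^{(\alpha)}_n=-\sum_{i>n}w^{(\alpha)}_i>0$ (all $w^{(\alpha)}_i$ with $i\ge1$ being negative and $\sum_{i\ge0}w^{(\alpha)}_i=0$) while $g^{(\alpha)}_n\le g^{(\alpha)}_0=1$; hence, using $\sum_{i=1}^{n}(g^{(\alpha)}_{i-1}-g^{(\alpha)}_i)=1-g^{(\alpha)}_n$ and the induction hypothesis,
\[
\phi_n\le\Big(\max_{0\le j\le n-1}\phi_j\Big)\big(1-g^{(\alpha)}_n\big)<\max_{0\le j\le n-1}\phi_j\le1 .
\]
For the summation identity, substitute $m=i-j$ and set $N=n-j$, so the claim becomes $S_N:=\sum_{m=0}^{N}g^{(\alpha)}_m\phi_{N-m}=1$ for all $N\ge0$. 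Here $S_0=g^{(\alpha)}_0\phi_0=1$; inserting the recurrence for $\phi_N$ into $S_N=\phi_N+\sum_{m=1}^{N}g^{(\alpha)}_m\phi_{N-m}$ and cancelling makes $S_N$ collapse to $\sum_{i=1}^{N}g^{(\alpha)}_{i-1}\phi_{N-i}=S_{N-1}$, so $S_N\equiv 1$ by induction.

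For \eqref{d2}--\eqref{d3} the decisive step is to compute the generating function $\Phi(\xi):=\sum_{n\ge0}\phi_n\xi^n$. The recurrence together with $\phi_0=1$ says precisely that $\sum_{i=0}^{n}w^{(\alpha)}_i\phi_{n-i}=0$ for $n\ge1$, i.e.\ $\Phi(\xi)(1-\xi)^{\alpha}=1$, so $\Phi(\xi)=(1-\xi)^{-\alpha}$ and $\phi_n=\Gamma(n+\alpha)/(\Gamma(\alpha)\Gamma(n+1))$. Two consequences follow: first, Gautschi's inequality $\Gamma(m+\alpha)/\Gamma(m+1)\le m^{\alpha-1}$ (provable from the log-convexity of $\Gamma$) gives $\phi_m\le m^{\alpha-1}/\Gamma(\alpha)$ for $m\ge1$; second, multiplying $\Phi$ by $(1-\xi)^{-1}$ yields $\sum_{m=0}^{n-1}\phi_m=[\xi^{n-1}](1-\xi)^{-\alpha-1}=\Gamma(n+\alpha)/(\Gamma(1+\alpha)\Gamma(n))$. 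Since $\sum_{i=1}^{n}\phi_{n-i}=\sum_{m=0}^{n-1}\phi_m$, \eqref{d2} is equivalent to $\Gamma(n+\alpha)/(\Gamma(\alpha)\Gamma(n))\le n^{\alpha}$, which follows from Gautschi once more (write $\Gamma(n+\alpha)/\Gamma(n)=n\cdot\Gamma(n+\alpha)/\Gamma(n+1)\le n^{\alpha}$ and use $\Gamma(\alpha)>1$ for $0<\alpha<1$).

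For \eqref{d3} I would combine the pointwise bound $\phi_{n-i}\le(n-i)^{\alpha-1}/\Gamma(\alpha)$ (legitimate since $n-i\ge1$) with a monotone comparison to an integral: the map $s\mapsto(n-s)^{\alpha-1}s^{(k-1)\alpha}$ is nonnegative and increasing on $(0,n)$, hence
\[
\sum_{i=1}^{n-1}\phi_{n-i}\,i^{(k-1)\alpha}\le\frac{1}{\Gamma(\alpha)}\int_{0}^{n}(n-s)^{\alpha-1}s^{(k-1)\alpha}\,ds=\frac{n^{k\alpha}}{\Gamma(\alpha)}B\big(\alpha,1+(k-1)\alpha\big)=n^{k\alpha}\,\frac{\Gamma(1+(k-1)\alpha)}{\Gamma(1+k\alpha)} ,
\]
and dividing by $\Gamma(\alpha)\Gamma(1+(k-1)\alpha)$ and using $\Gamma(\alpha)>1$ gives \eqref{d3} (for $k=1$ this also reduces to \eqref{d2}). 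A route closer to \cite{D.} would instead prove \eqref{d3} by induction on $k$, with base case $k=1$ being \eqref{d2}, converting the exponent-$(k-1)\alpha$ bound into the exponent-$k\alpha$ bound with the help of the identity in \eqref{d1}.

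The parts that are routine are \eqref{d1} and the reduction of \eqref{d2} to a $\Gamma$-ratio inequality. The real work is in \eqref{d3}: one must dominate the discrete convolution $\sum_{i=1}^{n-1}\phi_{n-i}i^{(k-1)\alpha}$ by the corresponding integral \emph{without losing the sharp Beta-function constant}, which requires both a precise pointwise estimate on the kernel $\phi_m$ (the Gautschi bound) and a careful monotone comparison of the sum with $\int_0^n(n-s)^{\alpha-1}s^{(k-1)\alpha}\,ds$, keeping the integrable singularity of $(n-s)^{\alpha-1}$ at $s=n$ under control. In the inductive-in-$k$ variant the same difficulty resurfaces in the induction step, when one passes from exponent $(k-1)\alpha$ to exponent $k\alpha$.
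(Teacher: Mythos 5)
Your proof is correct and follows essentially the same route as the paper: identify $\phi_n$ explicitly as the coefficient of $\xi^n$ in $(1-\xi)^{-\alpha}$, deduce a pointwise bound of the form $\phi_m\le m^{\alpha-1}$ (you carry a harmless extra factor $1/\Gamma(\alpha)$ and discharge it via $\Gamma(\alpha)\ge 1$), and dominate the sum in (iii) by the Beta integral $\int_0^n(n-s)^{\alpha-1}s^{(k-1)\alpha}\,ds$, just as the paper does after reindexing to a decreasing integrand. The differences are only in execution --- you get the closed form from the generating-function identity and Gautschi's inequality where the paper uses induction with the Vandermonde convolution and the elementary bound $\ln(1+x)\le x$, you evaluate the partial sum in (ii) exactly rather than comparing with $\int_0^n x^{\alpha-1}\,dx$, and you supply the argument for (i) that the paper delegates to \cite{D.} --- none of which changes the substance.
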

\begin{proof}
	The proof of the first estimate \eqref{d1} is similar to part (i) of Lemma 3.2 in \cite{D.}. In order to prove \eqref{d2}, we first prove
	\begin{equation}\label{d4}
	\phi_n=(-1)^n \binom{-\alpha}{n}\quad n\geq 0,
	\end{equation}
	where $(-1)^n \binom{-\alpha}{n}$ represents the binomial coefficients of the expansion $(1-\xi)^{-\alpha}=\sum_{n=0}^{\infty}\phi_n \xi^n.$ The fractional binomial coefficients $\binom{-\alpha}{n}$ are defined in a similar way as the integer binomial coefficients with the gamma function, i.e.,
	\begin{equation*}
	\binom{-\alpha}{n}=\frac{\Gamma(-\alpha+1)}{\Gamma (n+1)\Gamma (-\alpha-n+1)}=\frac{-\alpha(-\alpha-1)(-\alpha-2)...(-\alpha-n+1)}{\Gamma (n+1)}.
	\end{equation*}
	For proving \eqref{d4}, we use induction hypothesis on $n.$ It is obvious that \eqref{d4} holds for $n=0$. Next we assume that \eqref{d4} holds for $n\leq m$. Then, we have 
	\begin{equation*}
	\phi_{m+1}=-\sum_{i=1}^{m+1}w^{(\alpha)}_i\phi_{m+1-i}=\sum_{i=1}^{m+1} (-1)^{i+1}\binom{\alpha}{i}(-1)^{m+1-i}\binom{-\alpha}{m+1-i}
	\end{equation*}
	\begin{equation}\label{A8}
	=(-1)^{m}\sum_{i=0}^{m+1} \binom{\alpha}{i}\binom{-\alpha}{m+1-i}+(-1)^{m+1}\binom{\alpha}{0}\binom{-\alpha}{m+1}.
	\end{equation}
	By using the well-known Vandermonde convolution \cite{sriv} in \eqref{A8}, we get 
	\begin{equation*}
	\phi_{m+1}= (-1)^{m}\binom{0}{m+1}+(-1)^{m+1}\binom{-\alpha}{m+1}=(-1)^{m+1}\binom{-\alpha}{m+1}.
	\end{equation*}
	This completes the proof of \eqref{d4}.\\
	From \eqref{d4} one can obtain $\phi_0=1,$ $\phi_n=\prod_{l=1}^{n}(1+\frac{\alpha-1}{l}).$ By the application of the inequality $\ln(1+x)\leq x$ for $x>-1,$ we have 
	\begin{equation*}
	\ln \phi_n=\sum_{l=1}^{n}\ln\Big(1+\frac{\alpha-1}{l}\Big)\leq (\alpha-1)\sum_{l=1}^{n}\frac{1}{l}\leq (\alpha-1) \int_{1}^{n+1}\frac{1}{x} dx.
	\end{equation*}
	This implies that, $\phi_n\leq (n+1)^{\alpha-1}$ for $n\geq 0.$ \\
	Now, we prove \eqref{d2}, take 
	\begin{equation*}
	\frac{1}{\Gamma (\alpha)} \sum_{i=1}^{n}\phi_{n-i} =\frac{1}{\Gamma (\alpha)} \sum_{i=0}^{n-1}\phi_{i} \leq\frac{1}{\Gamma (\alpha)} \sum_{i=1}^{n}\frac{1}{i^{1-\alpha}}\leq \frac{1}{\Gamma (\alpha)}  \int_{0}^{n}\frac{1}{x^{1-\alpha}}dx= \frac{n^{\alpha}}{\Gamma (1+\alpha)}.
	\end{equation*}
	This completes the proof of \eqref{d2}.\\
	Further, we prove \eqref{d3}. It is obvious that \eqref{d3} holds for $n=1,$ so to prove \eqref{d3} we use the inequality  $\phi_n\leq (n+1)^{\alpha-1}$ for $n\geq 0$  which implies $\phi_n\leq n^{\alpha-1}$ for $n\geq 1,$ then
	\begin{equation}\label{d5}
	\begin{split}
	\sum_{i=1}^{n-1}\phi_{n-i} \ i^{(k-1)\alpha} &\leq   \sum_{i=1}^{n-1}  (n-i)^{\alpha-1} \ i^{(k-1)\alpha}\\
	&= \sum_{i=1}^{n-1}i^{\alpha-1} (n-i)^{(k-1)\alpha},\quad \mbox{for} \quad n\geq 2.
	\end{split}
	\end{equation}
	For any fixed $n\geq 2$ and $k\geq 1,$ we define 
	\begin{equation*}
	f(x):=x^{\alpha-1}(n-x)^{(k-1)\alpha}, \quad x\in [1, n-1].
	\end{equation*} 
	Therefore, $f'(x)<0$ $\forall  \ x\in [1,n-1],$ $n\geq 2$ and $k\geq 1.$ From this we can conclude that $f(x)$ is decreasing function in $[1,n-1]$. Thus, we can write \eqref{d5} as follows
	\begin{equation}\label{d6}
	\begin{split}
	\sum_{i=1}^{n-1}i^{\alpha-1} (n-i)^{(k-1)\alpha} &\leq   \int_{0}^{n-1}x^{\alpha-1} (n-x)^{(k-1)\alpha}dx\\
	&\leq  \int_{0}^{n}x^{\alpha-1} (n-x)^{(k-1)\alpha}dx,
	\end{split}
	\end{equation} 
	where we have used the fact that $\int_{n-1}^{n}f(x)dx\geq 0$ for $n\geq 2.$ Now, we have
	\begin{equation}\label{d7}
	\begin{split}
	\int_{0}^{n}x^{\alpha-1} (n-x)^{(k-1)\alpha}dx&=n^{k\alpha}\int_{0}^{1}y^{\alpha-1}(1-y)^{(k-1)\alpha}dy\\
	&=n^{k\alpha} B(\alpha,1+(k-1)\alpha),
	\end{split}
	\end{equation}
	where  $B(m,n)=\frac{\Gamma (m)\Gamma (n)}{\Gamma (m+n)}, m>0,\ n>0$ denotes the beta function. Finally, using \eqref{d7} and \eqref{d6} in \eqref{d5} to get
	\begin{equation*}
	\begin{split}
	\frac{1}{\Gamma (\alpha) \Gamma (1+(k-1)\alpha)} \sum_{i=1}^{n-1}\phi_{n-i} \ i^{(k-1)\alpha} &\leq \frac{n^{k\alpha} B(\alpha,1+(k-1)\alpha)}{\Gamma (\alpha) \Gamma (1+(k-1)\alpha)}  \\
	&= \frac{n^{k\alpha}}{\Gamma (1+k\alpha)}.
	\end{split}
	\end{equation*} 
	This completes the proof of Lemma \eqref{dkl}. 
\end{proof}
\begin{lemma}\label{dkl2}
	Consider the matrix
	\begin{equation}
	W = 2\mu\Delta t^{\alpha}
	\left [
	\begin{array}{ccccc}
	0&\phi_1&\cdots &\phi_{n-2}  &\phi_{n-1} \\
	0&0&\cdots &\phi_{n-3}  &\phi_{n-2} \\
	\vdots & \vdots & \ddots & \vdots & \vdots\\
	0&0&\cdots &0  &\phi_1 \\
	0&0&\cdots &0  &0 \\
	\end{array}
	\right ]_{n\times n}.
	\end{equation}
	Then, $W$ satisfies the following properties
	\begin{equation*}
	(i) \ W^l=0, \quad l\geq  n, \hspace{12cm}
	\end{equation*}
	\begin{equation*}
	(ii)\ W^k \mathbold{e}\leq \frac{1}{\Gamma(1+k\alpha)}[(2\Gamma (\alpha)\mu t_n^{\alpha})^k,(2\Gamma (\alpha)\mu t_{n-1}^{\alpha})^k,...,(2\Gamma (\alpha)\mu t_1^{\alpha})^k]^{\prime}, \hspace{.6cm} {k=0,1,2,...,} \hspace{3cm}
	\end{equation*}
	\begin{equation*}
	(iii) \ \sum_{k=0}^{l}W^k \mathbold{e}=\sum_{k=0}^{n-1}W^k \mathbold{e}\leq [E_{\alpha}(2\Gamma (\alpha)\mu t_n^{\alpha}),E_{\alpha}(2\Gamma (\alpha)\mu t_{n-1}^{\alpha}),...,E_{\alpha}(2\Gamma (\alpha)\mu t_1^{\alpha})]^{\prime}, \hspace{.5cm} l\geq n, \hspace{2cm}
	\end{equation*} 
	where $\mathbold{e}=[1,1,..,1]^{\prime}\in\mathbb{R}^n$.
\end{lemma}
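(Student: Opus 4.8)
The plan is to exploit that $W$ is a strictly upper triangular $n\times n$ matrix with nonnegative entries: its $(i,j)$ entry equals $2\mu\Delta t^{\alpha}\phi_{j-i}$ when $j>i$ and vanishes otherwise, with $\phi_{m}>0$ by \eqref{d1}. Part (i) is then immediate, since a strictly upper triangular $n\times n$ matrix is nilpotent of index at most $n$, so $W^{n}=0$ and hence $W^{l}=0$ for every $l\geq n$.

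For part (ii) I would induct on $k$. The base case $k=0$ is the identity $W^{0}\mathbold{e}=\mathbold{e}$, which matches the stated bound because $\Gamma(1)=1$. For the inductive step, write $(W^{k+1}\mathbold{e})_{i}=2\mu\Delta t^{\alpha}\sum_{j>i}\phi_{j-i}(W^{k}\mathbold{e})_{j}$; since all $\phi_{j-i}>0$, the componentwise induction hypothesis may be inserted under the sum. Setting $m=j-i$ and $p=n-i+1$ (so that the $j$-th entry of the bound vector, namely $(2\Gamma(\alpha)\mu t_{n-j+1}^{\alpha})^{k}$, becomes $(2\Gamma(\alpha)\mu t_{p-m}^{\alpha})^{k}$ and $t_{p-m}^{k\alpha}=(p-m)^{k\alpha}\Delta t^{k\alpha}$), the estimate collapses to
\begin{equation*}
(W^{k+1}\mathbold{e})_{i}\ \leq\ \frac{2\mu(2\Gamma(\alpha)\mu)^{k}\Delta t^{(k+1)\alpha}}{\Gamma(1+k\alpha)}\sum_{m=1}^{p-1}\phi_{m}(p-m)^{k\alpha}\ =\ \frac{2\mu(2\Gamma(\alpha)\mu)^{k}\Delta t^{(k+1)\alpha}}{\Gamma(1+k\alpha)}\sum_{l=1}^{p-1}\phi_{p-l}\,l^{k\alpha}.
\end{equation*}
The inner sum is exactly what \eqref{d3} controls, applied with $n$ replaced by $p$ and $k$ replaced by $k+1$ (for $k=0$ this is the $k=1$ instance of \eqref{d3}, essentially \eqref{d2}); it is bounded by $\Gamma(\alpha)\Gamma(1+k\alpha)\,p^{(k+1)\alpha}/\Gamma(1+(k+1)\alpha)$. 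Plugging this in, the $\Gamma(1+k\alpha)$ factors cancel and $\Delta t^{(k+1)\alpha}p^{(k+1)\alpha}=t_{p}^{(k+1)\alpha}=t_{n-i+1}^{(k+1)\alpha}$, so $(W^{k+1}\mathbold{e})_{i}\leq(2\Gamma(\alpha)\mu t_{n-i+1}^{\alpha})^{k+1}/\Gamma(1+(k+1)\alpha)$, which is the claim at level $k+1$.

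For part (iii), part (i) gives $W^{k}=0$ for $k\geq n$, so the partial sums stabilize: $\sum_{k=0}^{l}W^{k}\mathbold{e}=\sum_{k=0}^{n-1}W^{k}\mathbold{e}$ for all $l\geq n$. Bounding each term by part (ii) and, since every summand is nonnegative, enlarging the finite sum to an infinite one, the $i$-th component is dominated by $\sum_{k=0}^{\infty}(2\Gamma(\alpha)\mu t_{n-i+1}^{\alpha})^{k}/\Gamma(1+k\alpha)=E_{\alpha}(2\Gamma(\alpha)\mu t_{n-i+1}^{\alpha})$, which is the asserted inequality.

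The conceptual content is carried entirely by Lemma \ref{dkl}; the only additional ingredients are the nonnegativity of $W$ (needed to propagate componentwise inequalities through multiplication by $W$) and the nilpotency from part (i). The one genuinely fiddly point will be the index bookkeeping in the inductive step of (ii) — aligning $j-i$, $n-j+1$, and $p=n-i+1$ so that the inner sum matches the left-hand side of \eqref{d3} exactly — together with carefully tracking the $\Delta t^{\alpha}$ and Beta-function factors through the cancellation. I do not expect any substantive obstacle beyond that.
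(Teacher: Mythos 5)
Your proof is correct, and it is essentially the argument the paper relies on: the paper simply defers to Lemma 3.3 of \cite{D.}, whose proof is the same induction you give — nilpotency from strict upper triangularity, componentwise propagation of the bound using the nonnegativity of the entries, and the key estimate \eqref{d3} (with $n\mapsto p=n-i+1$, $k\mapsto k+1$) to advance the induction, after which (iii) follows by summing and comparing with the Mittag-Leffler series. Your index bookkeeping checks out, including the trivial last-row case $p=1$.
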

\begin{proof}
	The proof of Lemma \eqref{dkl2} is similar to Lemma 3.3 of \cite{D.}. 
\end{proof} 
\begin{lemma}\label{mainresult}
	Let $\{a^n,\ b^n| \ n\geq 0\}$ be nonnegative sequences and $\mu_1$, $\mu_2$ be nonnegative constants. For $a^0=0$ and 
	\begin{equation}\label{dl8}
	^RD^{\alpha}_{\Delta t}a^n\leq\mu_1 a^n + \mu_2 a^{n-1}+b^n,\quad n\geq 1,
	\end{equation}
	there exists a positive constant $\Delta t^*$ such that, when $\Delta t\leq \Delta t^*,$
	\begin{equation*}
	a^n\leq 2\Big(\frac{t_n^{\alpha}}{\alpha} \ \max_{0\leq i\leq n}b^i\Big)E_{\alpha}(2 \Gamma (\alpha) \mu t_n^{\alpha}), \quad 1\leq n\leq N,
	\end{equation*}
	where $E_{\alpha}(z)$=$\sum_{j=0}^{\infty}\frac{z^j}{\Gamma(1+j\alpha) }$ is the Mittag-Leffler function and $\mu=\mu_1+\frac{\mu_2}{\alpha}.$
\end{lemma}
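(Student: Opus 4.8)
The plan is to invert the discrete operator ${}^{R}D^{\alpha}_{\Delta t}$ by means of the sequence $\{\phi_n\}$ of Lemma~\ref{dkl}, turn \eqref{dl8} into a single weighted summation inequality for $a^n$, reduce it to the precise shape treated in Lemma~\ref{dkl2}, and then conclude by the matrix iteration given there. The starting point is the identity
\begin{equation*}
a^{n}=\Delta t^{\alpha}\sum_{m=1}^{n}\phi_{n-m}\,{}^{R}D^{\alpha}_{\Delta t}a^{m},\qquad n\ge 1.
\end{equation*}
Indeed, writing ${}^{R}D^{\alpha}_{\Delta t}a^{m}=\Delta t^{-\alpha}\sum_{i=1}^{m}g^{(\alpha)}_{m-i}\,\delta a^{i}$ as in \eqref{weight} (which uses $a^{0}=0$), substituting, interchanging the order of summation and invoking $\sum_{m=i}^{n}\phi_{n-m}\,g^{(\alpha)}_{m-i}=1$ from \eqref{d1} reduces the right-hand side to the telescoping sum $\sum_{i=1}^{n}\delta a^{i}=a^{n}$. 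Since $\phi_{n-m}>0$ by \eqref{d1}, inserting the hypothesis \eqref{dl8} into this identity gives
\begin{equation*}
a^{n}\le \mu_1\Delta t^{\alpha}\sum_{m=1}^{n}\phi_{n-m}a^{m}+\mu_2\Delta t^{\alpha}\sum_{m=1}^{n}\phi_{n-m}a^{m-1}+\Delta t^{\alpha}\sum_{m=1}^{n}\phi_{n-m}b^{m}.
\end{equation*}
Because $\Delta t^{\alpha}\sum_{m=1}^{n}\phi_{n-m}=\Delta t^{\alpha}\sum_{j=0}^{n-1}\phi_{j}\le \Delta t^{\alpha}n^{\alpha}/\alpha=t_n^{\alpha}/\alpha$ by \eqref{d2}, the last term is bounded by $B_n:=(t_n^{\alpha}/\alpha)\max_{0\le i\le n}b^{i}$.

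Next I would bring the inequality to the form required by Lemma~\ref{dkl2}. Separating the $m=n$ term of the first sum (where $\phi_0=1$) and moving it to the left produces the factor $1-\mu_1\Delta t^{\alpha}$; choosing $\Delta t^{*}$ so that $\mu_1(\Delta t^{*})^{\alpha}\le \tfrac12$ (and any $\Delta t^{*}>0$ if $\mu_1=0$) keeps this factor at least $\tfrac12$. For the middle sum, a shift of index together with $a^{0}=0$ gives $\sum_{m=1}^{n}\phi_{n-m}a^{m-1}=\sum_{m=1}^{n-1}\phi_{n-1-m}a^{m}$, and the product representation $\phi_k=\prod_{l=1}^{k}(1+\frac{\alpha-1}{l})$ established in the proof of Lemma~\ref{dkl} yields $\phi_{k-1}\le \frac1\alpha\phi_{k}$ for $k\ge1$ (this is just $(k-1)(1-\alpha)\ge0$), whence $\sum_{m=1}^{n}\phi_{n-m}a^{m-1}\le \frac1\alpha\sum_{m=1}^{n-1}\phi_{n-m}a^{m}$. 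Collecting everything, and setting $\mu=\mu_1+\mu_2/\alpha$, I obtain for every $\Delta t\le\Delta t^{*}$
\begin{equation*}
a^{n}\le 2\mu\,\Delta t^{\alpha}\sum_{m=1}^{n-1}\phi_{n-m}a^{m}+2B_n,\qquad 1\le n\le N.
\end{equation*}

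It remains to solve this. Read componentwise it becomes $\mathbold{a}\le W\mathbold{a}+2\mathbold{B}$ with $\mathbold{a}=[a^{n},a^{n-1},\dots,a^{1}]^{\prime}$, $\mathbold{B}=[B_n,B_{n-1},\dots,B_1]^{\prime}$ and $W$ the matrix of Lemma~\ref{dkl2}. Since $W$ has nonnegative entries, iterating yields $\mathbold{a}\le W^{l}\mathbold{a}+2\sum_{k=0}^{l-1}W^{k}\mathbold{B}$ for every $l$; taking $l=n$ and using $W^{n}=0$ from part~(i) of Lemma~\ref{dkl2} leaves $\mathbold{a}\le 2\sum_{k=0}^{n-1}W^{k}\mathbold{B}$. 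As $B_m$ is nondecreasing in $m$ we have $\mathbold{B}\le B_n\mathbold{e}$ with $\mathbold{e}=[1,\dots,1]^{\prime}$, so part~(iii) of Lemma~\ref{dkl2} gives $\mathbold{a}\le 2B_n\sum_{k=0}^{n-1}W^{k}\mathbold{e}\le 2B_n\big[E_{\alpha}(2\Gamma(\alpha)\mu t_n^{\alpha}),\dots\big]^{\prime}$, and the first component is precisely $a^{n}\le 2\big(\frac{t_n^{\alpha}}{\alpha}\max_{0\le i\le n}b^{i}\big)E_{\alpha}(2\Gamma(\alpha)\mu t_n^{\alpha})$.

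I expect the main obstacle to be the middle step: one must funnel the $\mu_2 a^{m-1}$ contribution into the same weighted sum $\sum_{m=1}^{n-1}\phi_{n-m}a^{m}$ — this is exactly where the factor $1/\alpha$, and hence the combination $\mu=\mu_1+\mu_2/\alpha$, appears — and carry out the $m=n$ absorption so that the resulting recursion matches the matrix $W$ of Lemma~\ref{dkl2} verbatim, including the constant $2$. Once the inversion identity and this reduction are in hand, the facts $W^{n}=0$ and the bound on $\sum_{k}W^{k}\mathbold{e}$ finish the argument with no further work.
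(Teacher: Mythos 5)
Your proposal is correct and follows essentially the same route the paper takes: the paper's proof is only a sketch that rewrites \eqref{dl8} via \eqref{weight} and defers to Lemmas \ref{dkl} and \ref{dkl2} along the lines of Lemma 3.1 of \cite{D.}, which is exactly the inversion-by-$\phi_n$, absorption of the $\mu_1\Delta t^{\alpha}a^n$ term, and matrix iteration with $W^{n}=0$ and the Mittag-Leffler bound that you carry out. Your write-up in fact supplies details the paper leaves implicit, notably the shift $\phi_{k-1}\le \frac{1}{\alpha}\phi_k$ that converts the $\mu_2 a^{n-1}$ contribution into the combination $\mu=\mu_1+\frac{\mu_2}{\alpha}$.
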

\begin{proof}  From the definition of discrete fractional differential operator \eqref{weight}, we can write \eqref{dl8} as follows  
	\begin{equation}\
	\sum_{j=1}^{i}g^{(\alpha)}_{i-j}\delta a^j\leq\Delta t^{\alpha}(\mu_1 a^j + \mu_2 a^{j-1}+\Delta t^{\alpha}b^j).
	\end{equation}
	Further, by using  the Lemmas \ref{dkl} and \ref{dkl2}, the proof of Lemma \ref{mainresult}  follows in similar line as proof of Lemma 3.1 in \cite{D.}.\\
\end{proof}
\begin{lemma}\label{inequality}
	For any sequence $\{e^k\}_{k=0}^{N}\subset X_h$, following inequality holds
	\begin{equation}
	\langle ^RD^{\alpha}_{\Delta t}e^{k},(1-\frac{\alpha}{2})e^k+\frac{\alpha}{2}e^{k-1}\rangle\geq \frac{1}{2} \ ^RD^{\alpha}_{\Delta t}\|e^k\|^2,\quad \mbox{for} \quad 1\leq k\leq N.
	\end{equation}
\end{lemma}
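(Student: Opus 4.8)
The plan is to reduce the asserted inequality to a pure sign statement about a finite sum weighted by the Gr\"unwald weights $w^{(\alpha)}_j$, and then to close it using a completion-of-squares identity together with the sign pattern $w^{(\alpha)}_0=1$, $w^{(\alpha)}_1=-\alpha$, $w^{(\alpha)}_j<0$ for $j\ge 1$, and $\sum_{j=0}^{k}w^{(\alpha)}_j=g^{(\alpha)}_k>0$ recalled earlier in this section. Write $e^{k,\alpha}:=(1-\tfrac{\alpha}{2})e^{k}+\tfrac{\alpha}{2}e^{k-1}$. Since ${}^RD^{\alpha}_{\Delta t}$ is the discrete convolution $\Delta t^{-\alpha}\sum_{j=0}^{k}w^{(\alpha)}_{j}(\cdot)^{k-j}$, and ${}^RD^{\alpha}_{\Delta t}\|e^k\|^2$ is the same operator applied to the scalar sequence $\|e^k\|^2$, multiplying the claim by $2\Delta t^{\alpha}>0$ reduces it to
\[
\sum_{j=0}^{k}w^{(\alpha)}_{j}\Bigl(2\langle e^{k-j},e^{k,\alpha}\rangle-\|e^{k-j}\|^{2}\Bigr)\ \ge\ 0 .
\]

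The main step is the elementary identity $2\langle a,c\rangle-\|a\|^{2}=\|c\|^{2}-\|a-c\|^{2}$, applied with $a=e^{k-j}$ and $c=e^{k,\alpha}$. It rewrites the left-hand side as
\[
\Bigl(\sum_{j=0}^{k}w^{(\alpha)}_{j}\Bigr)\|e^{k,\alpha}\|^{2}-\sum_{j=0}^{k}w^{(\alpha)}_{j}\,\|e^{k-j}-e^{k,\alpha}\|^{2}
= g^{(\alpha)}_{k}\,\|e^{k,\alpha}\|^{2}-\sum_{j=0}^{k}w^{(\alpha)}_{j}\,\|e^{k-j}-e^{k,\alpha}\|^{2}.
\]
Since $g^{(\alpha)}_{k}>0$ and $\|e^{k,\alpha}\|^{2}\ge 0$, it remains only to check that the subtracted sum is $\le 0$. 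For this I would isolate the two boundary terms: a direct computation gives $e^{k}-e^{k,\alpha}=\tfrac{\alpha}{2}\,\delta e^{k}$ and $e^{k-1}-e^{k,\alpha}=-(1-\tfrac{\alpha}{2})\,\delta e^{k}$ with $\delta e^{k}=e^{k}-e^{k-1}$, so (using $w^{(\alpha)}_0=1$, $w^{(\alpha)}_1=-\alpha$) the $j=0$ and $j=1$ terms combine to $\bigl(\tfrac{\alpha^{2}}{4}-\alpha(1-\tfrac{\alpha}{2})^{2}\bigr)\|\delta e^{k}\|^{2}$, while every term with $j\ge 2$ is $\le 0$ because $w^{(\alpha)}_{j}<0$ (when $k=1$ no such terms occur and the same conclusion holds). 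The factorisation $\tfrac{\alpha^{2}}{4}-\alpha(1-\tfrac{\alpha}{2})^{2}=-\tfrac{\alpha}{4}(1-\alpha)(4-\alpha)\le 0$ for $0<\alpha<1$ then shows the whole subtracted sum is $\le 0$; hence the left-hand side is in fact bounded below by $g^{(\alpha)}_{k}\|e^{k,\alpha}\|^{2}\ge 0$, which is slightly stronger than required, and dividing back by $2\Delta t^\alpha$ gives the lemma.

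There is no genuine obstacle here; the only points needing care are (i) justifying $g^{(\alpha)}_{k}>0$ — which follows from $g^{(\alpha)}_{0}=1$, the monotonicity $g^{(\alpha)}_{i-1}>g^{(\alpha)}_{i}$, and $\sum_{i\ge 0}w^{(\alpha)}_{i}=0$, or directly from $g^{(\alpha)}_{k}=\Gamma(k+1-\alpha)/(\Gamma(k+1)\Gamma(1-\alpha))$ — and (ii) the two explicit values $w^{(\alpha)}_{0}=1$ and $w^{(\alpha)}_{1}=-\alpha$, both already recorded above. An alternative route is the Fourier / positive-definiteness argument based on $\mathrm{Re}\,(1-e^{-\mathrm{i}\theta})^{\alpha}\ge 0$ for $0<\alpha<1$, but the completion-of-squares computation above is shorter and keeps the whole argument at the discrete level.
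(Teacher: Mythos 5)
Your proof is correct, and it takes a genuinely different route from the paper. The paper proves the lemma by expanding $\langle {}^RD^{\alpha}_{\Delta t}e^{k},(1-\frac{\alpha}{2})e^k+\frac{\alpha}{2}e^{k-1}\rangle$ into diagonal and cross terms, bounding every cross term $\langle e^j,e^k\rangle$, $\langle e^j,e^{k-1}\rangle$ by $\frac{1}{2}(\|e^j\|^2+\|e^k\|^2)$ etc.\ (the inequality flips the right way because the coefficients $w^{(\alpha)}_j$, $j\ge 1$, and $(1-\frac{\alpha}{2})w^{(\alpha)}_1+\frac{\alpha}{2}w^{(\alpha)}_0=\frac{\alpha}{2}(\alpha-1)$ are negative), and then regrouping the resulting coefficients of $\|e^k\|^2$, $\|e^{k-1}\|^2$, $\|e^j\|^2$ to reassemble $\frac{1}{2}\,{}^RD^{\alpha}_{\Delta t}\|e^k\|^2$. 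You instead subtract the right-hand side first, apply the completion-of-squares identity $2\langle a,c\rangle-\|a\|^2=\|c\|^2-\|a-c\|^2$ with $c=e^{k,\alpha}$, invoke $g^{(\alpha)}_k=\sum_{j=0}^{k}w^{(\alpha)}_j>0$, and then only have to check the sign of the two boundary terms, which collapse to $\bigl(\frac{\alpha^2}{4}-\alpha(1-\frac{\alpha}{2})^2\bigr)\|\delta e^k\|^2=-\frac{\alpha}{4}(1-\alpha)(4-\alpha)\|\delta e^k\|^2\le 0$; all your algebra checks out ($e^k-e^{k,\alpha}=\frac{\alpha}{2}\delta e^k$, $e^{k-1}-e^{k,\alpha}=-(1-\frac{\alpha}{2})\delta e^k$, $w^{(\alpha)}_1=-\alpha$), and the $k=1$ case is covered. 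What your approach buys: it actually yields the strictly stronger estimate $\langle {}^RD^{\alpha}_{\Delta t}e^{k},e^{k,\alpha}\rangle\ge \frac{1}{2}\,{}^RD^{\alpha}_{\Delta t}\|e^k\|^2+\frac{\Delta t^{-\alpha}}{2}g^{(\alpha)}_k\|e^{k,\alpha}\|^2$, and it isolates exactly where $0<\alpha<1$ enters (through $w^{(\alpha)}_1=-\alpha$ and the factorization), whereas the paper's argument is a longer term-by-term estimate in the style of L1-scheme energy proofs that discards the extra positive term. Both arguments are elementary and rely only on the sign pattern of the weights and the positivity of the partial sums $g^{(\alpha)}_k$, which the paper records at the start of Section 4.
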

\begin{proof}
	We have
	\begin{equation*}
	\begin{split}
	\langle ^RD^{\alpha}_{\Delta t}e^{k},(1-\frac{\alpha}{2})e^k+\frac{\alpha}{2}e^{k-1}\rangle&=(1-\frac{\alpha}{2})\langle ^RD^{\alpha}_{\Delta t}e^{k},e^k\rangle+\frac{\alpha}{2}\langle ^RD^{\alpha}_{\Delta t}e^{k},e^{k-1}\rangle\\
	&=\Delta t^{-\alpha}\Big((1-\frac{\alpha}{2})\sum_{j=0}^{k}w_{k-j}^{(\alpha)}\langle e^j,e^k\rangle+\frac{\alpha}{2}\sum_{j=0}^{k}w_{k-j}^{(\alpha)}\langle e^j,e^{k-1}\rangle\Big)\\
	\end{split}
	\end{equation*}
	\begin{equation*}
	\begin{split}
	\hspace{3.5cm}	&=\Delta t^{-\alpha}\Big((1-\frac{\alpha}{2})w_0^{(\alpha)}\|e^k\|^2+\frac{\alpha}{2}w_1^{(\alpha)}\|e^{k-1}\|^2\\
	&\hspace{2cm} +\big((1-\frac{\alpha}{2})w_1^{(\alpha)}+\frac{\alpha}{2}w_0^{(\alpha)}\big)\langle e^k,e^{k-1}\rangle\\
	\end{split}
	\end{equation*}
	\begin{equation*}
	\hspace{8cm}+(1-\frac{\alpha}{2})\sum_{j=0}^{k-2}w_{k-j}^{(\alpha)}\langle e^j,e^k\rangle+\frac{\alpha}{2}\sum_{j=0}^{k-2}w_{k-j}^{(\alpha)}\langle e^j,e^{k-1}\rangle\Big)\\
	\end{equation*}
	\begin{equation*}
	\hspace{3cm}	\geq \Delta t^{-\alpha}\Big((1-\frac{\alpha}{2})w_0^{(\alpha)}\|e^k\|^2+\frac{\alpha}{2}w_1^{(\alpha)}\|e^{k-1}\|^2\\
	\end{equation*}
	\begin{equation*}
	\begin{split}
	\hspace{4cm}&\quad \quad+\big((1-\frac{\alpha}{2})w_1^{(\alpha)}+\frac{\alpha}{2}w_0^{(\alpha)}\big)\frac{\|e^k\|^2+\|e^{k-1}\|^2}{2}\\
	&\hspace{2cm}+(1-\frac{\alpha}{2})\sum_{j=0}^{k-2}w_{k-j}^{(\alpha)}\frac{\|e^j\|^2+\|e^k\|^2}{2}\\
	&\hspace{3cm}+\frac{\alpha}{2}\sum_{j=0}^{k-2}w_{k-j}^{(\alpha)}\frac{\|e^j\|^2+\|e^{k-1}\|^2}{2}\Big),
	\end{split}
	\end{equation*}
	where we have used the fact $(1-\frac{\alpha}{2})w_1^{(\alpha)}+\frac{\alpha}{2}w_0^{(\alpha)}<0$ and $w_j^{(\alpha)}<0 \ \forall j\geq 1.$\\
	Next, we have
	\begin{equation*}
	\begin{split}
	\langle ^RD^{\alpha}_{\Delta t}e^{k},(1-\frac{\alpha}{2})e^k+\frac{\alpha}{2}e^{k-1}\rangle
	&\geq \Delta t^{-\alpha}\Big(\big((1-\frac{\alpha}{2})w_0^{(\alpha)}+\frac{1}{2}(1-\frac{\alpha}{2})w_1^{(\alpha)}+\frac{\alpha}{4}w_0^{(\alpha)}\big)\|e^k\|^2\\
	\end{split}
	\end{equation*}
	\begin{equation*}
	\begin{split}
	\hspace{6cm}	&+\big(\frac{\alpha}{2}w_1^{(\alpha)}+\frac{1}{2}(1-\frac{\alpha}{2})w_1^{(\alpha)}+\frac{\alpha}{4}w_0^{(\alpha)}\big)\|e^{k-1}\|^2\\
	&\hspace{1cm}+\frac{1}{2}\sum_{j=0}^{k-2}w_{k-j}^{(\alpha)}\|e^j\|^2+\frac{1}{2}(1-\frac{\alpha}{2})\sum_{j=0}^{k-2}w_{k-j}^{(\alpha)}\|e^{k}\|^2\\
	&\hspace{5cm}+\frac{\alpha}{4}\sum_{j=0}^{k-2}w_{k-j}^{(\alpha)}\|e^{k-1}\|^2\Big)
	\end{split}
	\end{equation*}
	\begin{equation*}
	\begin{split}
	\hspace{5cm}&= \Delta t^{-\alpha}\Big(\big((1-\frac{\alpha}{2})w_0^{(\alpha)}+\frac{\alpha}{4}w_0^{(\alpha)}-\frac{1}{2}(1-\frac{\alpha}{2})w_0^{(\alpha)}\big)\|e^k\|^2\\
	&\hspace{1.5cm}+\big(\frac{\alpha}{2}w_1^{(\alpha)}+\frac{1}{2}(1-\frac{\alpha}{2})w_1^{(\alpha)}-\frac{\alpha}{4}w_1^{(\alpha)}\big)\|e^{k-1}\|^2\\
	\end{split}
	\end{equation*}
	\begin{equation*}
	\begin{split}
	\hspace{7cm}&+\frac{1}{2}\sum_{j=0}^{k-2}w_{k-j}^{(\alpha)}\|e^j\|^2+\frac{1}{2}(1-\frac{\alpha}{2})\sum_{j=0}^{k}w_{k-j}^{(\alpha)}\|e^{k}\|^2\\
	&\hspace{3cm}+\frac{\alpha}{4}\sum_{j=0}^{k}w_{k-j}^{(\alpha)}\|e^{k-1}\|^2\Big)\\
	\end{split}
	\end{equation*}
	\begin{equation*}
	\hspace{3cm}	\geq \frac{\Delta t^{-\alpha}}{2}\sum_{j=0}^{k}w_{k-j}^{(\alpha)}\|e^j\|^2=\frac{1}{2} \ ^RD^{\alpha}_{\Delta t}\|e^k\|^2.
	\end{equation*}
	This completes the proof of Lemma \ref{inequality}.
\end{proof}
\noindent In the following theorem we provide {\it{a priori}} bound for the fully-discrete solution $U^n_{h}$.
\begin{theorem}\label{stability1 theorem}
	Let $U_h^n$ be the solution of the fully-discrete scheme $(\ref{fully discrete})$.  Then, there exists positive constant  ${\Delta t}^{*}$ such that when $\Delta t\leq {\Delta t}^{*},$ the solution $U_h^n$ satisfies
	\begin{equation}
	\|U_h^n\|\leq C,
	\end{equation}
	where  $n= 1,2,...,N$ and $C$ is a positive constant independent of $h$ and $\Delta t.$
\end{theorem}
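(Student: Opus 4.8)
The plan is to derive an energy-type identity by testing the fully-discrete scheme \eqref{fully discrete} with $w_h=U_h^{n,\alpha}$, which yields
\[
\langle {}^RD^{\alpha}_{\Delta t}U_h^{n}, U_h^{n,\alpha}\rangle + \|\nabla U_h^{n,\alpha}\|^2 = \langle f(U_h^{n,\alpha}), U_h^{n,\alpha}\rangle .
\]
By Lemma \ref{inequality} the first term on the left is bounded below by $\tfrac12\,{}^RD^{\alpha}_{\Delta t}\|U_h^n\|^2$, and the gradient term is nonnegative, so after discarding it and using the growth bound \eqref{f1bound} together with the Cauchy--Schwarz and Young inequalities on the right, I obtain
\[
\tfrac12\,{}^RD^{\alpha}_{\Delta t}\|U_h^n\|^2 \le a\big(1+\|U_h^{n,\alpha}\|\big)\|U_h^{n,\alpha}\| \le C_1 + C_2\|U_h^{n,\alpha}\|^2,
\]
where $C_1,C_2$ depend only on $a$ (hence on $L$ and $\|f(0)\|$) and are independent of $h$ and $\Delta t$.

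Next I would bound $\|U_h^{n,\alpha}\|^2 \le (1-\tfrac{\alpha}{2})\|U_h^n\|^2 + \tfrac{\alpha}{2}\|U_h^{n-1}\|^2$, which follows from the convexity of $\|\cdot\|^2$ since $(1-\tfrac{\alpha}{2})+\tfrac{\alpha}{2}=1$ with both weights nonnegative. Setting $a^n := \|U_h^n\|^2$, $b^n := 2C_1$, $\mu_1 := 2C_2(1-\tfrac{\alpha}{2})$ and $\mu_2 := \alpha C_2$, the previous estimate becomes exactly
\[
{}^RD^{\alpha}_{\Delta t}a^n \le \mu_1 a^n + \mu_2 a^{n-1} + b^n, \qquad n\ge 1,
\]
with $a^0 = \|U_h^0\|^2 = 0$ because $U_h^0=0$. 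This is precisely the hypothesis of Lemma \ref{mainresult}.

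Applying Lemma \ref{mainresult} with $\mu = \mu_1 + \mu_2/\alpha$, there is $\Delta t^*>0$ such that for $\Delta t \le \Delta t^*$,
\[
\|U_h^n\|^2 = a^n \le 2\Big(\frac{t_n^{\alpha}}{\alpha}\,\max_{0\le i\le n} b^i\Big)E_{\alpha}(2\Gamma(\alpha)\mu t_n^{\alpha}) \le \frac{4C_1 T^{\alpha}}{\alpha}\,E_{\alpha}(2\Gamma(\alpha)\mu T^{\alpha}) =: C^2,
\]
using $b^i \equiv 2C_1$ and the monotonicity of $t\mapsto t^{\alpha}$ and of $E_\alpha$ together with $t_n \le T$. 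Taking square roots gives $\|U_h^n\|\le C$ with $C$ independent of $h$ and $\Delta t$, as claimed. The only delicate point is organizing the nonlinear term so that the discrete inequality has the exact affine shape required by Lemma \ref{mainresult} while keeping every constant independent of $h$ and $\Delta t$; once that bookkeeping is done the fractional Gr\"onwall estimate does the rest, and the smallness condition $\Delta t \le \Delta t^*$ is inherited directly from Lemma \ref{mainresult}.
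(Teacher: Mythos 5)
Your proposal is correct and follows essentially the same route as the paper: test the scheme with $w_h=U_h^{n,\alpha}$, use the linear growth bound \eqref{f1bound} with Young's inequality, invoke Lemma \ref{inequality} to pass to ${}^RD^{\alpha}_{\Delta t}\|U_h^n\|^2$, split $\|U_h^{n,\alpha}\|^2$ into $\|U_h^n\|^2$ and $\|U_h^{n-1}\|^2$ contributions, and close with the discrete fractional Gronwall Lemma \ref{mainresult}. Your explicit bookkeeping of $\mu_1,\mu_2,b^n$ and the convexity bound on $\|U_h^{n,\alpha}\|^2$ is, if anything, slightly cleaner than the paper's version of the same step.
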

\begin{proof}
	From  (\ref{fully discrete}), we have
	\begin{equation}\label{A3}
	\langle ^RD^{\alpha}_{\Delta t}U_h^{n}, w_{h}\rangle+\langle\nabla U_h^{n,\alpha}, \nabla w_{h}\rangle=\langle f(U_h^{n,\alpha}), w_{h}\rangle,\quad \forall \  w_{h}\in X_{h}.\\
	\end{equation}
	Setting $w_{h}=U_h^{n,\alpha}$ in  \eqref{A3} to obtain
	\begin{equation} \label{stb1}
	\langle ^RD^{\alpha}_{\Delta t}U_h^{n},U_h^{n,\alpha}\rangle+\|\nabla U_h^{n,\alpha}\|^2\leq\frac{1}{2}\big(\|f(U_h^{n,\alpha})\|^2+\| U_h^{n,\alpha}\|^2\big).
	\end{equation}
	Using \eqref{f1bound} in \eqref{stb1} to get
	\begin{equation}\label{dp}
	\langle ^RD^{\alpha}_{\Delta t}U_h^{n},U_h^{n,\alpha}\rangle+\|\nabla U_h^{n,\alpha}\|^2
	\leq C\big((1+\| U_h^{n,\alpha}\|)^2+\| U_h^{n,\alpha}\|^2\big).\\
	\end{equation}
	For $a,b\geq 0,$ using the fact $(a+b)^2\leq 2(a^2+b^2)$ in \eqref{dp} we have
	\begin{equation}\label{dp1}
	\langle ^RD^{\alpha}_{\Delta t}U_h^{n},U_h^{n,\alpha}\rangle \leq C\big(1+\| U_h^{n,\alpha}\|^2\big).\\
	\end{equation}
	Using Lemma \ref{inequality} in \eqref{dp1}, we have
	\begin{equation} \label{stb2}
	{^RD}^{\alpha}_{\Delta t}\|U_h^{n}\|^2\leq C\big(1+\| U_h^{n,\alpha}\|^2).\\
	\end{equation}
	From \eqref{stb2} we get
	\begin{equation}\label{A4}
	\begin{split}
	{^RD}^{\alpha}_{\Delta t}\|U_h^{n}\|^2\leq 
	C\Big(1+&\big(1-\frac{\alpha}{2}\big)^2\| U_h^{n}\|^2\\
	&+\big(\frac{\alpha}{2}\big)^2\| U_h^{n-1}\|^2\Big).
	\end{split}
	\end{equation}
	Using Lemma \ref{mainresult} in \eqref{A4}, one can find a positive constant ${\Delta t}^*$ such that when $\Delta t\leq {\Delta t}^*$, then
	\begin{equation*} \label{stb4}
	\begin{split}
	\|U_h^{n}\|^2&\leq C.\\
	\end{split}
	\end{equation*}
	Thus
	\begin{equation*} 
	\begin{split}
	\|U_h^{n}\|&\leq C.\\
	\end{split}
	\end{equation*}
	This completes the proof of Theorem \ref{stability1 theorem} .
\end{proof}
Further, we consider the {\it{a priori}} error estimate for the fully-discrete scheme \eqref{fully discrete}. The direct comparison between $u(x,t_n)=u^n(x)\ \mbox{and}\ U_h^n $ may not yield optimal convergence. Therefore we need to define Ritz projection $R_h:H^1_0(\Omega)\cap H^2(\Omega)\rightarrow X_h$ which satisfies following equation \cite{Thomee}
\begin{equation}
\label{proj}
\langle \nabla R_h w,\nabla v_h\rangle=\langle\nabla w,\nabla v_h\rangle,\hspace{.5cm} \forall \ w\in H_0^1(\Omega)\cap H^2(\Omega),\ v_h\in X_h.\\
\end{equation}
The operator $R_h$ defined in \eqref{proj} satisfies the following approximation property which is useful in the derivation of {\it{a priori}} error estimate.
\begin{theorem}\cite{R.}\label{projerr} There exists a positive constant C, independent of $h$ such that
	\begin{equation}
	\|w-R_hw\|_j\leq Ch^{i-j}\|w\|_i \  , \hspace{.5cm}\forall \  w\in H^{i}\cap H_0^1, \hspace{.2cm} j=0 ,\ 1; \ i=1, \ 2.
	\end{equation}
\end{theorem}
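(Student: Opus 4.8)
The plan is to derive the estimate the way Ritz projection bounds are classically obtained: use the defining orthogonality of $R_h$ to reduce the $H^1$ error to a best-approximation quantity controlled by interpolation, and then recover the $L^2$ bound by an Aubin--Nitsche duality argument. Throughout write $e:=w-R_hw$ and let $I_h:H^1_0(\Omega)\to X_h$ be a quasi-interpolant (Cl\'ement/Scott--Zhang type, well defined on $H^1_0$ and reducing to nodal interpolation on $H^2$) satisfying $\|w-I_hw\|_j\le Ch^{i-j}\|w\|_i$ for $i=1,2$, $j=0,1$, which exists because $\mathcal{T}_h$ is shape-regular.

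First I would prove the $H^1$ estimates ($j=1$). Subtracting \eqref{proj} gives the Galerkin orthogonality $\langle\nabla e,\nabla v_h\rangle=0$ for all $v_h\in X_h$. Hence, for any $v_h\in X_h$,
\begin{equation*}
\|\nabla e\|^2=\langle\nabla e,\nabla(w-v_h)\rangle\le\|\nabla e\|\,\|\nabla(w-v_h)\|,
\end{equation*}
so $\|\nabla e\|\le\inf_{v_h\in X_h}\|\nabla(w-v_h)\|$. For $i=1$ take $v_h=0$ to get $\|\nabla e\|\le\|\nabla w\|\le\|w\|_1$; for $i=2$ take $v_h=I_hw$ and use $\|w-I_hw\|_1\le Ch\|w\|_2$. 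Since $w\in H^1_0(\Omega)$ and $R_hw\in X_h\subset H^1_0(\Omega)$, we have $e\in H^1_0(\Omega)$, so Poincar\'e's inequality yields $\|e\|_1\le C\|\nabla e\|\le Ch^{i-1}\|w\|_i$ for $i=1,2$.

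Next I would prove the $L^2$ estimates ($j=0$) by duality. Let $\psi\in H^1_0(\Omega)$ be the weak solution of $-\Delta\psi=e$ in $\Omega$; since $\Omega$ is a bounded convex polygon/polyhedron, elliptic regularity gives $\psi\in H^1_0(\Omega)\cap H^2(\Omega)$ with $\|\psi\|_2\le C\|e\|$. Using the weak form of the dual problem together with the Galerkin orthogonality $\langle\nabla e,\nabla I_h\psi\rangle=0$,
\begin{equation*}
\|e\|^2=\langle\nabla e,\nabla\psi\rangle=\langle\nabla e,\nabla(\psi-I_h\psi)\rangle\le\|\nabla e\|\,\|\nabla(\psi-I_h\psi)\|\le Ch\|\nabla e\|\,\|\psi\|_2\le Ch\|\nabla e\|\,\|e\|.
\end{equation*}
Therefore $\|e\|\le Ch\|\nabla e\|\le Ch\cdot h^{i-1}\|w\|_i=Ch^i\|w\|_i$ for $i=1,2$, which is the claimed bound for $j=0$.

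The step I expect to be the crux is the duality estimate, and specifically the elliptic regularity bound $\|\psi\|_2\le C\|e\|$ for the dual problem: this is exactly where the convexity of $\Omega$ enters and cannot be dropped. A secondary technical point is the interpolation operator — for $w$ merely in $H^1_0(\Omega)$ (the case $i=1$) the nodal interpolant need not be defined when $d\ge2$, so one must use a Cl\'ement- or Scott--Zhang-type quasi-interpolant; once that substitution is made, the argument above carries through verbatim and, being entirely standard, is precisely the content of the reference cited for this theorem.
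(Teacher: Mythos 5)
Your argument is correct and is exactly the standard proof (C\'ea/Galerkin orthogonality for the $H^1$ bound, Aubin--Nitsche duality with convexity-based $H^2$ regularity for the $L^2$ bound) that the paper itself does not reproduce but simply imports by citing the reference for this theorem. The two refinements you flag — convexity entering only through the dual regularity estimate, and the use of a Cl\'ement/Scott--Zhang quasi-interpolant when $w$ is only in $H^1_0(\Omega)$ — are precisely the right technical points, so nothing is missing.
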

\noindent By using an intermediate projection $R_h,$ we can write the error as follows
\begin{equation*}
\begin{split}
u^n(x)-U_h^n=u^n-U_h^n=&(u^n -R_hu^n)+(R_hu^n-U_h^n)=\rho_h^n+\theta_h^n.\\
\end{split}
\end{equation*}
In the following theorem we derive {\it{a priori}} error estimate for the fully-discrete problem \eqref{fully discrete}.
\begin{theorem}\label{fully1 theorem}
	Let $u^n$ be the solution of $(\ref{cuc:1.1})$- $(\ref{cuc:1.4})$ and $U_h^n$ be the solution of the fully-discrete scheme $(\ref{fully discrete}).$ Then, there exists positive constant  ${\Delta t}^{*}$ such that if $\Delta t\leq {\Delta t}^{*},$ then the following estimate holds
	\begin{equation}
	\|u^n-U_h^n\|\leq C\big(\Delta t^{2}+h^2\big),
	\end{equation}
	where  $n= 1,2,...,N$ and $C$ is a positive constant independent of $h$ and $\Delta t.$
\end{theorem}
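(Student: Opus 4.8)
The plan is to use the Ritz-projection error splitting together with the discrete fractional Gronwall inequality of Lemma~\ref{mainresult}. Write $u^n-U_h^n=\rho_h^n+\theta_h^n$ with $\rho_h^n=u^n-R_hu^n$ and $\theta_h^n=R_hu^n-U_h^n$. The projection part is immediate: by Theorem~\ref{projerr} and the regularity $u\in C\big([0,T];H_0^1(\Omega)\cap H^2(\Omega)\big)$ from Theorem~\ref{uniquet}, $\|\rho_h^n\|\le Ch^2\|u^n\|_2\le Ch^2$. Hence it suffices to bound $\|\theta_h^n\|$, and then the claim follows by the triangle inequality.

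Next I would derive a consistency identity. Since $u(0)=0$ the equation can be written in Riemann--Liouville form, and evaluating it at $t_{n-\alpha/2}$ gives ${}^{R}D^{\alpha}_{t_{n-\alpha/2}}u-\Delta u(t_{n-\alpha/2})=f\big(u(t_{n-\alpha/2})\big)$. Using the second-order consistency \eqref{A10} of ${}^{R}D^{\alpha}_{\Delta t}$, the Taylor expansion of the linear interpolant $u^{n,\alpha}=(1-\tfrac{\alpha}{2})u^n+\tfrac{\alpha}{2}u^{n-1}$ about $t_{n-\alpha/2}$, and the Lipschitz bound on $f$ (all under the smoothness/compatibility assumptions of Remark~\ref{remark}), one obtains for every $w_h\in X_h$
\[
\langle {}^{R}D^{\alpha}_{\Delta t}u^n,w_h\rangle+\langle\nabla u^{n,\alpha},\nabla w_h\rangle=\langle f(u^{n,\alpha}),w_h\rangle+\langle R^n,w_h\rangle,\qquad \|R^n\|\le C\Delta t^2 .
\]
Subtracting the scheme \eqref{fully discrete} and using the defining property \eqref{proj} of $R_h$ to cancel $\langle\nabla\rho_h^{n,\alpha},\nabla w_h\rangle$ yields the error equation
\[
\langle {}^{R}D^{\alpha}_{\Delta t}\theta_h^n,w_h\rangle+\langle\nabla\theta_h^{n,\alpha},\nabla w_h\rangle=\langle f(u^{n,\alpha})-f(U_h^{n,\alpha}),w_h\rangle+\langle R^n,w_h\rangle-\langle {}^{R}D^{\alpha}_{\Delta t}\rho_h^n,w_h\rangle .
\]

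Then I would test with $w_h=\theta_h^{n,\alpha}=(1-\tfrac{\alpha}{2})\theta_h^n+\tfrac{\alpha}{2}\theta_h^{n-1}$, discard the nonnegative term $\|\nabla\theta_h^{n,\alpha}\|^2$, and invoke Lemma~\ref{inequality} to bound the leading term below by $\tfrac12\,{}^{R}D^{\alpha}_{\Delta t}\|\theta_h^n\|^2$. On the right, the nonlinear term is controlled by hypothesis $H$ together with $u^{n,\alpha}-U_h^{n,\alpha}=\rho_h^{n,\alpha}+\theta_h^{n,\alpha}$, and the truncation term by $\|R^n\|\le C\Delta t^2$; after Cauchy--Schwarz, Young's inequality, the bound $\|\rho_h^{n,\alpha}\|\le Ch^2$, and $\|\theta_h^{n,\alpha}\|^2\le 2(1-\tfrac{\alpha}{2})^2\|\theta_h^n\|^2+2(\tfrac{\alpha}{2})^2\|\theta_h^{n-1}\|^2$, this becomes an inequality of exactly the form required by Lemma~\ref{mainresult},
\[
{}^{R}D^{\alpha}_{\Delta t}\|\theta_h^n\|^2\le \mu_1\|\theta_h^n\|^2+\mu_2\|\theta_h^{n-1}\|^2+C(\Delta t^2+h^2)^2 .
\]
Since $\theta_h^0=R_hu^0-U_h^0=0$, Lemma~\ref{mainresult} gives $\|\theta_h^n\|^2\le 2\tfrac{t_n^{\alpha}}{\alpha}C(\Delta t^2+h^2)^2 E_{\alpha}(2\Gamma(\alpha)\mu t_n^{\alpha})\le C(\Delta t^2+h^2)^2$ because $t_n\le T$; combining with the bound on $\|\rho_h^n\|$ finishes the proof.

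The main obstacle is the term $\|{}^{R}D^{\alpha}_{\Delta t}\rho_h^n\|$ on the right of the error equation: the crude estimate $\|{}^{R}D^{\alpha}_{\Delta t}\rho_h^n\|\le\Delta t^{-\alpha}\sum_{i=0}^{n-1}g_i^{(\alpha)}\|\delta\rho_h^{n-i}\|$ with $\|\delta\rho_h^i\|\le Ch^2$ is too lossy, since the prefactor behaves like $t_n^{1-\alpha}/\Delta t$. Instead I would exploit that ${}^{R}D^{\alpha}_{\Delta t}$ has time-independent weights and is therefore interchangeable with the time-independent linear operator $R_h$, so that ${}^{R}D^{\alpha}_{\Delta t}\rho_h^n=(I-R_h)\,{}^{R}D^{\alpha}_{\Delta t}u^n$; Theorem~\ref{projerr} then gives $\|{}^{R}D^{\alpha}_{\Delta t}\rho_h^n\|\le Ch^2\|{}^{R}D^{\alpha}_{\Delta t}u^n\|_2$, and \eqref{A10} with the regularity ${}^{R}D^{\alpha}_{t}u={}^{C}D^{\alpha}_{t}u\in C\big([0,T];H^2(\Omega)\big)$ (available under the assumptions of Remark~\ref{remark}) bounds $\|{}^{R}D^{\alpha}_{\Delta t}u^n\|_2$ by a constant, hence $\|{}^{R}D^{\alpha}_{\Delta t}\rho_h^n\|\le Ch^2$. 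A minor point is to verify that the constant $\mu_1$ generated above needs no smallness restriction beyond the $\Delta t\le\Delta t^*$ already built into Lemma~\ref{mainresult}.
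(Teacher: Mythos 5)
Your proposal is correct and follows essentially the same route as the paper's proof: the Ritz-projection splitting $u^n-U_h^n=\rho_h^n+\theta_h^n$, the second-order consistency \eqref{A10} at $t_{n-\frac{\alpha}{2}}$ with Lipschitz control of the nonlinearity, testing with $\theta_h^{n,\alpha}$ together with Lemma \ref{inequality}, and the discrete fractional Gronwall inequality of Lemma \ref{mainresult} with $\theta_h^0=0$. Your handling of ${}^{R}D^{\alpha}_{\Delta t}\rho_h^n$ by commuting $R_h$ with the discrete operator is just a rearrangement of the paper's estimate \eqref{dl3}, which bounds the grouped term ${}^{R}D^{\alpha}_{\Delta t}R_hu^n-{}^{R}D^{\alpha}_{t_{n-\frac{\alpha}{2}}}u$ by the same $C(\Delta t^{2}+h^{2})$ under the same regularity assumptions.
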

\begin{proof}
	For any $w_h\in X_h,$ we have the following estimate for ${\theta_h^n}$
	\begin{equation}\label{dl1}
	\begin{split}
	\langle ^RD^{\alpha}_{\Delta t}{\theta_h^n},w_h\rangle+\langle \nabla{\theta_h^{n,\alpha},\nabla w_h}\rangle=&\langle ^RD^{\alpha}_{\Delta t}(R_hu^n-U_h^n),w_h\rangle+\langle\nabla(R_hu^{n,\alpha}-U_h^{n,\alpha}),\nabla w_h\rangle\\
	=&\langle ^RD^{\alpha}_{\Delta t}{R_hu^n},w_h\rangle+\langle\nabla{R_hu^{n,\alpha}},\nabla w_h\rangle\\
	&-\langle ^RD^{\alpha}_{\Delta t}{U_h^n},w_h\rangle-\langle\nabla{U_h^{n,\alpha}},\nabla w_h\rangle.\\
	\end{split}
	\end{equation}
	Using $(\ref{fully discrete})$ and $(\ref{proj})$ in  \eqref{dl1} to get
	\begin{equation}\label{A1}
	\begin{split}
	\langle ^RD^{\alpha}_{\Delta t}{\theta_h^n},w_h\rangle+\langle\nabla{\theta_h^{n,\alpha},\nabla w_h}\rangle&=\langle ^RD^{\alpha}_{\Delta t}{R_hu^n},w_h\rangle+\langle\nabla{u^{n,\alpha}},\nabla w_h\rangle\\
	-\langle f(u^{n-\frac{\alpha}{2}}),w_h\rangle
	+&\langle f(u^{n-\frac{\alpha}{2}}),w_h\rangle
	-\langle f(U_h^{n,\alpha}), w_{h}\rangle.
	\end{split}
	\end{equation}
	From the weak formulation of \eqref{cuc:1.1} one can get
	\begin{equation}\label{weak form}
	\begin{split}
	\langle ^RD^{\alpha}_{t_{n-\frac{\alpha}{2}}}u, w_h\rangle+\langle\nabla u^{n-\frac{\alpha}{2}}, \nabla w_h\rangle&=\langle f(u^{n-\frac{\alpha}{2}}), w_h\rangle.
	\end{split}
	\end{equation}
	Using \eqref{weak form} in \eqref{A1} to obtain
	\begin{equation}\label{solving}
	\begin{split}
	\langle ^RD^{\alpha}_{\Delta t}{\theta_{h}^n},w_h\rangle+\langle\nabla{\theta_{h}^{n,\alpha},\nabla w_h}\rangle=&\langle ^RD^{\alpha}_{\Delta t}{R_hu^n}-\  ^R{D}^{\alpha}_{t_{n-\frac{\alpha}{2}}}u,w_h\rangle\\
	&+\langle \nabla(u^{n,\alpha}-u^{n-\frac{\alpha}{2}}) ,\nabla w_h\rangle\\
	&+\langle f(u^{n-\frac{\alpha}{2}})-f(U_h^{n,\alpha}), w_{h}\rangle.
	\end{split}
	\end{equation}
	Setting $w_h=\theta_{h}^{n,\alpha}$ in $(\ref{solving})$ and using Cauchy-Schwarz inequality, we have
	\begin{equation*}
	\begin{split}
	\langle ^RD^{\alpha}_{\Delta t}{\theta_{h}^n},\theta_{h}^{n,\alpha}\rangle+\|\nabla{\theta_{h}^{n,\alpha}}\|^2
	\leq & \|^RD^{\alpha}_{\Delta t}{R_hu^n}-{^R{D}}^{\alpha}_{t_{{n-\frac{\alpha}{2}}}}u\|\ \|\theta_{h}^{n,\alpha}\|\\
	&\hspace{1cm}+\|\nabla(u^{n,\alpha}-u^{n-\frac{\alpha}{2}})\|\|\nabla\theta_{h}^{n,\alpha}\|
	\\&\hspace{1.5cm}+\|f(u^{n-\frac{\alpha}{2}})-f(U_h^{n,\alpha})\|\|\theta_{h}^{n,\alpha}\|\\
	\leq & \frac{L}{2}\|u^{n-\frac{\alpha}{2}}-U_h^{n,\alpha}\|^2+\frac{L}{2}\|\theta_{h}^{n,\alpha}\|^2\\
	&\hspace{1cm}+\frac{1}{2}\|\theta_{h}^{n,\alpha}\|^2
	+\frac{1}{2}\|^RD^{\alpha}_{\Delta t}{R_hu^n}-{^R{D}}^{\alpha}_{t_{n-\frac{\alpha}{2}}}u\|^2\\
	&\hspace{3cm}+\frac{1}{2}\|\nabla\theta_{h}^{n,\alpha}\|^2
	+\frac{1}{2}\|\nabla(u^{n,\alpha}-u^{n-\frac{\alpha}{2}})\|^2\\
	\end{split}
	\end{equation*}
	\begin{equation}\label{dl2}
	\begin{split}
	\hspace{5cm}\leq  \frac{L}{2}&\|u^{n-\frac{\alpha}{2}}-U_h^{n,\alpha}\|^2\\
	&+\left(\frac{L+1}{2}\right)\|\theta_{h}^{n,\alpha}\|^2+\frac{1}{2}\|^RD^{\alpha}_{\Delta t}{R_hu^n}-{^R{D}}^{\alpha}_{t_{n-\frac{\alpha}{2}}}u\|^2\\
	&\hspace{1cm}+\frac{1}{2}\|\nabla(u^{n,\alpha}-u^{n-\frac{\alpha}{2}})\|^2
	+\frac{1}{2}\|\nabla\theta_{h}^{n,\alpha}\|^2.
	\end{split}
	\end{equation}
	Note that
	\begin{equation}\label{erro1}
	\begin{split}
	\|u^{n-\frac{\alpha}{2}}-U_h^{n,\alpha}\|&\leq \|u^{n-\frac{\alpha}{2}}-u^{n,\alpha}\|+\|\rho_{h}^{n,\alpha}\|+\|\theta_{h}^{n,\alpha}\|\leq\|\theta_{h}^{n,\alpha}\|+C(\Delta t^2+h^2).
	\end{split}
	\end{equation}
	Also,
	\begin{equation}\label{dl3}
	\begin{split}
	\|^RD^{\alpha}_{\Delta t}{R_hu^n}-{^R{D}}^{\alpha}_{t_{n-\frac{\alpha}{2}}}u\|&\leq  \|^RD^{\alpha}_{\Delta t}{R_hu^n}-{^R{D}}^{\alpha}_{t_{n-\frac{\alpha}{2}}}R_hu\|\\
	&\hspace{3cm}+\|^RD^{\alpha}_{t_{n-\frac{\alpha}{2}}}{R_hu}-{^R{D}}^{\alpha}_{t_{n-\frac{\alpha}{2}}}u\|\\
	&\leq C(\Delta t^{2}+h^2),\\
	\end{split}
	\end{equation}
	and
	\begin{equation}\label{erro2}
	\begin{split}
	\|\nabla(u^{n-\frac{\alpha}{2}}-u^{n,\alpha})\|&\leq \left(1-\frac{\alpha}{2}\right)\left(\frac{\alpha}{2}\right)\Delta t\int_{t_{n-1}}^{t_{n}}\|\nabla u_{tt}(s)\|ds\leq C\Delta t^2.\\
	\end{split}
	\end{equation}
	Using \eqref{erro1}, \eqref{dl3} and \eqref{erro2} in \eqref{dl2}, we get
	\begin{equation}\label{dl4}
	\langle ^RD^{\alpha}_{\Delta t}{\theta_{h}^n},\theta_{h}^{n,\alpha}\rangle\leq \left(\frac{2L+1}{2}\right)\|\theta_{h}^{n,\alpha}\|^2+C\big(\Delta t^{2}+h^2\big)^2.
	\end{equation}
	This gives us
	\begin{equation}\label{theta1}
	{^RD}^{\alpha}_{\Delta t}\|{\theta_{h}^n}\|^2\leq \big({2L+1}\big)\|\theta_{h}^{n,\alpha}\|^2+C\big(\Delta t^{2}+h^2\big)^2.
	\end{equation}
	From $(\ref{theta1}),$ one can get
	\begin{equation*}
	{^RD}^{\alpha}_{\Delta t}\|{\theta_{h}^n}\|^2\leq C_0\|\theta_{h}^{n,\alpha}\|^2+C\big(\Delta t^{2}+h^2\big)^2.
	\end{equation*}
	where $C_0=2L+1.$\\
	Further,
	\begin{equation}\label{A2}
	\begin{split}
	{^RD}^{\alpha}_{\Delta t}\|{\theta_{h}^n}\|^2&\leq C_0 \left(1-\frac{\alpha}{2}\right)^2\|\theta_{h}^n\|^2\\
	&+C_0 \left(\frac{\alpha}{2}\right)^2\|\theta_{h}^{n-1}\|^2
	+C\big(\Delta t^{2}+h^2\big)^2.
	\end{split}
	\end{equation}
	Using Lemma \ref{mainresult}, in \eqref{A2} one can find a positive constant ${\Delta t}^*$ such that when $\Delta t\leq {\Delta t}^*$, then
	\begin{equation*}
	\begin{split}
	\|{\theta_{h}^n}\|^2&\leq C\big(\Delta t^{2}+h^2\big)^2,\\
	\|{\theta_{h}^n}\|&\leq C\big(\Delta t^{2}+h^2\big).
	\end{split}
	\end{equation*}
	Now, an application of triangular inequality and Theorem \ref{projerr} completes the proof of Theorem \ref{fully1 theorem}.
\end{proof}
\section{Numerical experiments}
In this section, we present several examples to validate our theoretical findings.  The proposed scheme \eqref{fully discrete} is applied to solve all the considered problems.  The errors are calculated in $L^2(\Omega)$ norm at final time level $T=1$ and the convergence rate is computed as follows\\
\begin{equation*}
\begin{split}
rate=\left\{ \begin{array}{rcl}
\frac{\text{log}(e(\Delta t,\ h_1)/e(\Delta t,\ h_2))}{\text{log} (h_1/h_2)} &\mbox{in spatial direction,} \\
\frac{\text{log}(e(\Delta t_1,\ h)/e(\Delta t_2,\ h))}{\text{log}(\Delta t_1/\Delta t_2)} &\mbox{in temporal direction,}
\end{array}\right.\\
\end{split}
\end{equation*}
where $e(\Delta t, \ h_1)$, $e(\Delta t, \ h_2)$, $e(\Delta t_1, \ h)$, $e(\Delta t_2, \ h)$ are $L^2(\Omega)$ errors at stepsizes $h_1$, $h_2,$ $\Delta t_1$, $\Delta t_2$ $(h_1\neq h_2, \ \Delta t_1\neq \Delta t_2)$  respectively.
For all the problems we consider the time interval $[0,1]$  and $\alpha=0.4, 0.6$. In Newton's method, tolerance is taken to be $\epsilon=10^{-7}$ as a stopping criteria. For one-dimensional problems, we take the spatial domain $\Omega=[0,1]$ and for the two-dimensional problem, we take the spatial domain $\Omega =[0,1]\times[0,1]$.\\
\noindent \textbf{Example 1.} Consider the following one-dimensional time-fractional Fisher's equation 
\begin{equation}\label{ex1}
\begin{split}
^C{D}^{\alpha}_{t}u-\frac{\partial^2 u}{\partial x^2} &= f(u)+g_1(x,t),  \quad x\in \Omega,  \quad t\in(0,1],\\
u(x,t)&=0, \quad x\in \partial\Omega, \quad t\in(0,1], \\
u(x,0)&=0,\quad x\in \Omega,
\end{split}
\end{equation}
where
\begin{equation*}
\begin{split}
f(u)=&u(1-u),\\
\end{split}
\end{equation*}
and 
\begin{equation*}
g_1(x,t)=\frac{24t^{4-\alpha}}{\Gamma(5-\alpha)}\sin(2\pi x) +4\pi^2t^4\sin(2\pi x)-t^4\sin(2\pi x)(1-t^4\sin(2\pi x)).
\end{equation*}
The exact solution of problem \eqref{ex1} is given by
\begin{equation*}
u(x,t)=t^4 \sin(2\pi x).
\end{equation*}
Equation \eqref{ex1} has several applications in biology, chemical kinetics and flame propagation (\cite{D.,D.Li.}, and references therein).\\
To obtain the convergence rate in spatial direction for the problem \eqref{ex1}, we take $\Delta t= 10^{-3}$ for different values of $h$.  Similarly, to obtain the convergence rate in temporal direction, we take $ h=2\times 10^{-4}$ for different values of $\Delta t$. \\ 
$L^2(\Omega)$ errors and convergence rates in spatial direction for this example are given in Table \ref{tab1}. Similarly, $L^2(\Omega)$ errors and convergence rates in temporal direction are given in Table \ref{tab2}. It can be seen that for  $\alpha=0.4,\ 0.6$ these estimated convergence rates are tending to limit close  to $2$ which is in accordance with the theoretical convergence order.
\begin{table}[htbp]
	\centering
	\caption{$L^2(\Omega)$ errors and convergence rates in spatial direction for Example 1}
	\begin{tabular}{|r|r|r|r|r|}
		\hline
		\multicolumn{1}{|r}{} & \multicolumn{2}{|l}{\hspace{1.4cm}$\alpha=0.4$}       & \multicolumn{2}{|l|}{\hspace{1.4cm}$\alpha=0.6$}   \\
		\hline
		\multicolumn{1}{|l}{$h$} & \multicolumn{1}{|l}{$\|u^n-U^n\|_{L^2(\Omega)}$ }& \multicolumn{1}{|l|}{Rate} & \multicolumn{1}{l}{$\|u^n-U^n\|_{L^2(\Omega)}$}  & \multicolumn{1}{|l|}{Rate} \\
		\hline
		$\frac{1}{2^2}$ &2.2133e-1\hspace{0.5cm}    &   -    &  2.2030e-1 \hspace{0.4cm}     & - \\
		$\frac{1}{2^3}$ &5.7425e-2  \hspace{0.4cm}      &   1.9465    & 5.7033e-2 \hspace{0.4cm}     & 1.9496 \\
		$\frac{1}{2^4}$ &1.4472e-2 \hspace{0.4cm}      &  1.9884    & 1.4365e-2 \hspace{0.4cm}     & 1.9892 \\
		$\frac{1}{2^5}$ & 3.6255e-3 \hspace{0.4cm}     &   1.9970    & 3.5983e-3 \hspace{0.4cm}     & 1.9972 \\
		\hline
	\end{tabular}%
	\label{tab1}%
\end{table}%

\begin{table}[htbp]
	\centering
	\caption{$L^2(\Omega)$ errors and convergence rates in temporal direction for Example 1}
	\begin{tabular}{|r|r|r|r|r|}
		\hline
		\multicolumn{1}{|r}{} & \multicolumn{2}{|l}{\hspace{1.4cm}$\alpha=0.4$}       & \multicolumn{2}{|l|}{\hspace{1.4cm}$\alpha=0.6$}   \\
		\hline
		\multicolumn{1}{|l}{$\Delta t$} & \multicolumn{1}{|l}{$\|u^n-U^n\|_{L^2(\Omega)}$ }& \multicolumn{1}{|l|}{Rate} & \multicolumn{1}{l}{$\|u^n-U^n\|_{L^2(\Omega)}$}  & \multicolumn{1}{|l|}{Rate} \\
		\hline
		$\frac{1}{2^2}$ &3.6934e-2\hspace{0.5cm}    &   -    &  4.9862e-2 \hspace{0.4cm}     & - \\
		$\frac{1}{2^3}$ &9.7834e-3 \hspace{0.4cm}      &   1.9165   & 1.3009e-2 \hspace{0.4cm}     & 1.9384 \\
		$\frac{1}{2^4}$ &2.5125e-3 \hspace{0.4cm}      &  1.9612    & 3.3145e-3 \hspace{0.4cm}     & 1.9726 \\
		$\frac{1}{2^5}$ & 6.3642e-4 \hspace{0.4cm}     &   1.9811   & 8.3614e-4 \hspace{0.4cm}     & 1.9870 \\
		\hline
	\end{tabular}%
	\label{tab2}%
\end{table}%
\noindent \textbf{Example 2}. We consider the following two-dimensional time-fractional Huxley equation
\begin{equation}\label{exscalr1}
\begin{split}
^C{D}^{\alpha}_{t}u(x,t)-\Delta u(x,t) &= f(u)+g_2(x,t),  \quad x\in \Omega,  \quad t\in(0,1],\\
u(x,t)&= 0, \quad x\in \partial\Omega, \quad t\in(0,1], \\
u(x,0)&=0,\hspace{.3cm} \quad x\in \Omega,
\end{split}
\end{equation}
where
\begin{equation*}
\begin{split}
f(u)=&u(1-u)(u-1),\\
\end{split}
\end{equation*}
and 
\begin{equation*}
\begin{split}
g_2(x,t)=\frac{6t^{3-\alpha}}{\Gamma(4-\alpha)}&(1-x_1)\sin(x_1)(1-x_2)\sin(x_2)+2t^3\Big((1-x_1)\sin(x_1)(1-x_2)\sin(x_2)\\
&+\cos(x_1)(1-x_2)\sin(x_2)
+(1-x_1)\sin(x_1)\cos(x_2)\Big)\\
+t^3&(1-x_1)\sin(x_1)(1-x_2)\sin(x_2)\big(t^3(1-x_1)\sin(x_1)(1-x_2)\sin(x_2)-1\big)^2.\\
\end{split}
\end{equation*}
The exact solution of problem \eqref{exscalr1} is given by
\begin{equation*}
u=t^3(1-x_1)\sin(x_1)(1-x_2)\sin(x_2),
\end{equation*}
where $x=(x_1,x_2)\in [0,1]\times [0,1]$.\\
Equation \eqref{exscalr1} describes numerous physical models in areas like population genetics in circuit theory and the transmission of nerve impulses (\cite{D.,D.Li.} and references therein).\\
To obtain the convergence rate in spatial direction for the considered two-dimensional problem, we take $\Delta t= 10^{-3}$ for different values of $h$. Similarly, to obtain the convergence rate in temporal direction, we take $ h=\frac{1}{525}$ for different values of $\Delta t$. \\ 
\noindent In this example, $L^2(\Omega)$ errors and convergence rates in spatial direction are given in Table \ref{tab5}. Similarly, $L^2(\Omega)$ errors and convergence rates in temporal direction are given in Table \ref{tab6}. Again, it can be seen that these estimated convergence rates are tending to limit close to $2$ which is in accordance with the theoretical convergence order.
\begin{table}[htbp]
	\centering
	\caption{$L^2(\Omega)$ errors and convergence rates in spatial direction for Example 2}
	\begin{tabular}{|r|r|r|r|r|}
		\hline
		\multicolumn{1}{|r}{} & \multicolumn{2}{|l}{\hspace{1.4cm}$\alpha=0.4$}       & \multicolumn{2}{|l|}{\hspace{1.4cm}$\alpha=0.6$}   \\
		\hline
		\multicolumn{1}{|l}{$h$} & \multicolumn{1}{|l}{$\|u^n-U^n\|_{L^2(\Omega)}$ }& \multicolumn{1}{|l|}{Rate} & \multicolumn{1}{l}{$\|u^n-U^n\|_{L^2(\Omega)}$}  & \multicolumn{1}{|l|}{Rate} \\
		\hline
		$\frac{1}{2^2}$ &5.3226e-3\hspace{0.5cm}    &   -    &  5.2790e-3 \hspace{0.4cm}     & - \\
		$\frac{1}{2^3}$ &1.3998e-3 \hspace{0.4cm}      &   1.9269   & 1.3860e-3 \hspace{0.4cm}     & 1.9294 \\
		$\frac{1}{2^4}$ &3.5447e-4 \hspace{0.4cm}      &  1.9815    & 3.5079e-4 \hspace{0.4cm}     & 1.9822 \\
		$\frac{1}{2^5}$ & 8.8910e-5 \hspace{0.4cm}     &   1.9952   & 8.7980e-5 \hspace{0.4cm}     & 1.9954 \\
		\hline
	\end{tabular}%
	\label{tab5}%
\end{table}%

\begin{table}[htbp]
	\centering
	\caption{$L^2(\Omega)$ errors and convergence rates in temporal direction for Example 2}
	\begin{tabular}{|r|r|r|r|r|}
		\hline
		\multicolumn{1}{|r}{} & \multicolumn{2}{|l}{\hspace{1.4cm}$\alpha=0.4$}       & \multicolumn{2}{|l|}{\hspace{1.4cm}$\alpha=0.6$}   \\
		\hline
		\multicolumn{1}{|l}{$\Delta t$} & \multicolumn{1}{|l}{$\|u^n-U^n\|_{L^2(\Omega)}$ }& \multicolumn{1}{|l|}{Rate} & \multicolumn{1}{l}{$\|u^n-U^n\|_{L^2(\Omega)}$}  & \multicolumn{1}{|l|}{Rate} \\
		\hline
		$\frac{1}{2}$\hspace{.1cm} &3.1544e-3\hspace{0.5cm}    &   -    &  4.3171e-3 \hspace{0.4cm}     & - \\
		$\frac{1}{2^2}$ &8.2884e-4 \hspace{0.4cm}      &   1.9282    & 1.1080e-3 \hspace{0.4cm}     & 1.9621 \\
		$\frac{1}{2^3}$ &2.1292e-4 \hspace{0.4cm}      &  1.9608   & 2.8199e-4 \hspace{0.4cm}     & 1.9743 \\
		$\frac{1}{2^4}$ & 5.4130e-5 \hspace{0.4cm}     &   1.9758  & 7.1315e-5 \hspace{0.4cm}     & 1.9833 \\
		\hline
	\end{tabular}%
	\label{tab6}%
\end{table}%
\par Note that we derived the error estimate in $L^2(\Omega)$ norm under the regularity assumption and certain compatibility conditions (see Remark \ref{remark}) on solution $u$ of the problem \eqref{cuc:1.1}-\eqref{cuc:1.4}. In general, regularity of initial data does not guarantee the regularity of the exact solution for the time-fractional partial differential equation. So straightforward implementation of scheme \eqref{fully discrete} does not achieve $O(\Delta t^2)$ accuracy \cite{jin2017analysis}. We demonstrate this by taking the following example in which the scheme \eqref{fully discrete} achieves $O(\Delta t)$ accuracy.\\
\noindent \textbf{Example 3}. Consider the following one-dimensional problem with unknown exact solution
\begin{equation}\label{ex2}
\begin{split}
^C{D}^{\alpha}_{t}u-\frac{\partial^2 u}{\partial x^2} &= f(u),  \quad x\in \Omega,  \quad t\in(0,1],\\
u(x,t)&=0, \quad x\in \partial\Omega, \quad t\in(0,1], \\
u(x,0)&=0,  \quad x\in \Omega,
\end{split}
\end{equation}
with
\begin{equation*}
\begin{split}
f(u)=&5+u(1+u^3).\\
\end{split}
\end{equation*}
Since the exact solution to the problem \eqref{ex2} is not known, we compute reference solution $u_{ref}(t) $ with finer mesh $h_{ref}=2^{-8}$ and $\Delta t_{ref}=2^{-10}$. To obtain the convergence rate in spatial direction, we compute the numerical solution $U_h^n$ with $\Delta t=2^{-10}$ for different values of $h$. Similarly, to obtain the convergence rate in temporal direction, we compute numerical solution $U_h^n$ with $h=2^{-8}$ for different values of $\Delta t$. The $L^2(\Omega)$ errors and convergence rates for Example $3$ in spatial direction and temporal direction are given in Tables \ref{tab3} and \ref{tab4} respectively.
\begin{table}[htbp]
	\centering
	\caption{$L^2(\Omega)$ errors and convergence rates in spatial direction for Example 3}
	\begin{tabular}{|r|r|r|r|r|}
		\hline
		\multicolumn{1}{|r}{} & \multicolumn{2}{|l}{\hspace{1.4cm}$\alpha=0.4$}       & \multicolumn{2}{|l|}{\hspace{1.4cm}$\alpha=0.6$}   \\
		\hline
		\multicolumn{1}{|l}{$h$} & \multicolumn{1}{|l}{$\|u^n-U^n\|_{L^2(\Omega)}$ }& \multicolumn{1}{|l|}{Rate} & \multicolumn{1}{l}{$\|u^n-U^n\|_{L^2(\Omega)}$}  & \multicolumn{1}{|l|}{Rate} \\
		\hline
		$\frac{1}{2^2}$ &5.2126e-3\hspace{0.5cm}    &   -    &  5.0108e-3 \hspace{0.4cm}     & - \\
		$\frac{1}{2^3}$ &1.3412e-3 \hspace{0.4cm}      &   1.9585    & 1.2902e-3 \hspace{0.4cm}     & 1.9575 \\
		$\frac{1}{2^4}$ &3.3647e-4 \hspace{0.4cm}      &  1.9949    & 3.2370e-4 \hspace{0.4cm}     & 1.9948 \\
		$\frac{1}{2^5}$ & 8.3254e-5 \hspace{0.4cm}     &   2.0149   & 8.0095e-5 \hspace{0.4cm}     & 2.0149 \\
		\hline
	\end{tabular}%
	\label{tab3}%
\end{table}%

\begin{table}[htbp]
	\centering
	\caption{$L^2(\Omega)$ errors and convergence rates in temporal direction for Example 3}
	\begin{tabular}{|r|r|r|r|r|}
		\hline
		\multicolumn{1}{|r}{} & \multicolumn{2}{|l}{\hspace{1.4cm}$\alpha=0.4$}       & \multicolumn{2}{|l|}{\hspace{1.4cm}$\alpha=0.6$}   \\
		\hline
		\multicolumn{1}{|l}{$\Delta t$} & \multicolumn{1}{|l}{$\|u^n-U^n\|_{L^2(\Omega)}$ }& \multicolumn{1}{|l|}{Rate} & \multicolumn{1}{l}{$\|u^n-U^n\|_{L^2(\Omega)}$}  & \multicolumn{1}{|l|}{Rate} \\
		\hline
		$\frac{1}{2^3}$ &5.3833e-4\hspace{0.5cm}    &   -    &  3.5785e-4 \hspace{0.4cm}     & - \\
		$\frac{1}{2^4}$ &2.8059e-4 \hspace{0.4cm}      &   0.9400    & 2.0541e-4 \hspace{0.4cm}     & 0.8009 \\
		$\frac{1}{2^5}$ &1.4107e-4 \hspace{0.4cm}      &  0.9921   & 1.0762e-4 \hspace{0.4cm}     & 0.9325 \\
		$\frac{1}{2^6}$ & 6.8939e-5 \hspace{0.4cm}     &   1.0330   & 5.3618e-5 \hspace{0.4cm}     & 1.0052 \\
		\hline
	\end{tabular}%
	\label{tab4}%
\end{table}%
\section{Conclusions}In this paper, we have proposed Crank-Nicolson-Galerkin finite element scheme for solving  the time-fractional nonlinear diffusion equation by using Newton's method. Well-posedness results have been discussed at discrete level. We established the fractional Gronwall type inequality for the Gr$\mathrm{\ddot{u}}$nwald-Letnikov approximation to the Riemann-Liouville fractional derivative. The efficiency of the proposed scheme has been demonstrated by several numerical experiments. For future work, it is worth exploring whether $O(\Delta t^2)$ accuracy is preserved by relaxing the regularity assumption and compatibility conditions which are considered in this paper. 
\section*{Acknowledgment}
First author is thankful to University Grants Commission, India, for financial grant through Senior Research  Fellowship.

\bibliographystyle{plain}
\bibliography{nonlinear}
\end{document}